\pgfplotsset{compat=1.10}
\newtheorem{theorem}{Theorem}
\newtheorem{lemma}{Lemma}
\newtheorem{corollary}{Corollary}
\newtheorem{remark}{Remark}
\newtheorem{assumption}{Assumption}
\newcommand{\reals}{\mathbb{R}}
\newcommand{\mto}[0]{\rightrightarrows}
\newcommand{\eigmax}[1]{{\lambda}_{\max} \left({#1}\right)}
\newcommand{\eigmin}[1]{{\lambda}_{\min}\left({#1}\right)}
\newcommand{\mc}{\mathcal}
\newcommand{\bb}{\mathbb}
\newcommand{\R}{\bb R}
\newcommand\K[0]{\mathcal{K}}
\newcommand{\argmin}{\operatorname{argmin}}
\newcommand{\proj}{\mathrm{proj}}
\newcommand{\prox}{\mathrm{prox}}
\newcommand{\nc}{\mathcal{N}}
\newcommand{\Rmnum}[1]{\expandafter\@slowromancap\romannumeral #1@}
\begin{document}

\title{Sampled-Data Online Feedback Equilibrium Seeking: Stability and Tracking}

\author{Giuseppe Belgioioso,   Dominic Liao-McPherson,  Mathias Hudoba de Badyn, \\ Saverio Bolognani, John Lygeros, and Florian D\"{o}rfler
\thanks{The authors are with the Automatic Control Laboratory, ETH Z\"{u}rich, Physikstrasse 3, 8092 Z\"{u}rich, Switzerland. Emails: \texttt{\{gbelgioioso, dliaomc, mbadyn, bsaverio, jlygeros, dorfler\}@ethz.ch}. This research is supported by the SNSF through NCCR Automation. 
}
}

\pagestyle{empty}
\maketitle         
\thispagestyle{empty}

\begin{abstract} 
This paper proposes a general framework for constructing feedback controllers that drive complex dynamical systems to ``efficient" steady-state (or slowly varying) operating points. Efficiency is encoded using generalized equations which can model a broad spectrum of useful objectives, such as optimality or equilibria (e.g. Nash, Wardrop, etc.) in noncooperative games. The core idea of the proposed approach is to directly implement iterative solution (or equilibrium seeking) algorithms in closed loop with physical systems. Sufficient conditions for closed-loop stability and robustness are derived; these also serve as the first closed-loop stability results for sampled-data feedback-based optimization. Numerical simulations of smart building automation and game-theoretic robotic swarm coordination support the theoretical results.
\end{abstract}

\section{Introduction}
It is desirable to operate many engineering systems at an ``optimal'' steady-state, for example: congestion control in communication networks \cite{low2002internet}, supply/demand balancing in power systems \cite{wood2013power}, and temperature control in smart buildings \cite{hatanaka2017integrated}. However, while optimality is a suitable solution concept in countless settings, game-theoretic solution concepts, such as (generalized) Nash Equilibria (NE) \cite{facchinei2010generalized}, are more appropriate tools for modelling  multi-agent systems with possibly self-interested agents that interact through a shared dynamical system. This is the case for many modern infrastructures such as power systems, e.g. in energy markets \cite{saad2012game} and electric vehicle fleets \cite{ma2011decentralized}, and traffic  networks \cite{barrera2014dynamic}.

A typical paradigm for optimal system operation is to design a tracking-type controller and periodically solve an optimization problem to compute the system setpoints. We will refer to this process as \textit{feedforward optimization}. In contrast, \textit{feedback optimization} (FO) seeks to design controllers that steer the plant to the (a-priori unknown) solution of an optimization problem \cite{simonetto2020time, hauswirthoptimization}. The argument for using feedback over feedforward optimization is the same as for feedback control: superior robustness to noise, model uncertainty, and unmeasured disturbances.

A convex optimization-based method for FO of static networks is presented in \cite{bernstein2019online} along with regret and tracking error analyses. Methods for non-convex problems are presented in \cite{haberle2020non,tang2018feedback} along with convergence and tracking results. A projected dynamical systems approach is explored in \cite{hauswirth2018time}. The robustness of FO is investigated theoretically \cite{colombino2019towards} and experimentally \cite{ortmann2020experimental}. All these works assume that the plant is a static map and thus neglect dynamical interactions.

In reality, the plant is often a dynamical system, a fact that can lead to stability issues \cite[Ex.~1.2]{hauswirthoptimization}. Asymptotic stability results for linear time invariant systems (LTI) coupled with saddle-flow dynamics are provided in \cite{colombino2019online}. The interconnection of input-constrained nonlinear systems with gradient flow dynamics is studied in \cite{hauswirth2020timescale}. A design framework for unconstrained online optimization of LTI systems is presented in \cite{lawrence2020linear} and related work on low-gain integral controllers for nonlinear systems can be found in \cite{simpson2020analysis}. Finally, the stability of saddle-flow-based FO for a class of nonlinear systems to a constrained convex optimization problem is investigated in \cite{li2020optimal}. Notably, all of these works use continuous-time flows.

FO uses static models and is primarily concerned with steady-state or slowly varying optimization. This contrasts with both suboptimal \cite{diehl2005nominal,liao2020time} and economic \cite{ellis2014tutorial} model predictive control  which uses a dynamic model to optimize trajectories, and extremum seeking (ES) \cite{ariyur2003real}, which does not use any model information.

In the context of games with dynamical agents, two main control approaches have been considered, built on passivity-based \cite{gadjov2018passivity,romano2019dynamic} and ES algorithms \cite{stankovic2011distributed, krilavsevic2021learning}, respectively.
In \cite{gadjov2018passivity}, passivity is leveraged to design a distributed control law with convergence guarantees to a NE in games with single-integrator agents. The extension to the case of multi-integrator agents with dynamics affected by (partially-known) disturbances is presented in \cite{romano2019dynamic}.
In \cite{stankovic2011distributed}, an ES-based controller is designed for NE seeking in games with nonlinear dynamical agents. The extension to games with coupling constraints is presented for the first time in \cite{krilavsevic2021learning}. In both cases, convergence is proven only in the disturbance-free case, and to an approximate solution of the game.

This paper extends feedback optimization to \textit{feedback equilibrium seeking} (FES), wherein the goal is to design a feedback controller that drives the plant to the solution trajectory of a time-varying generalized equation (GE).
GEs contain constrained optimization as a special case but can model a broader range of phenomena including Nash and Wardrop equilibiria. Our contributions are threefold:
\begin{enumerate}[1.]
\item We introduce a general design framework for feedback equilibrium seeking in nonlinear systems.
\item We provide a closed-loop stability analysis of the interconnection between the algorithm and the physical system in a sampled-data setting, derive sufficient conditions for input-to-state stability of the closed-loop system, and characterize the asymptotic tracking error. The analysis is also valid for discrete-time plants.
\item We validate and illustrate our results using smart building control and distributed robotic coordination examples.
\end{enumerate}
Notably, FES contains FO as a special case. As such, this paper provides the first closed-loop stability analysis of feedback optimization in a sampled-data setting (which is extremely common in practice).

Our sampled-data approach enables the use of well-known algorithms from optimization and (monotone) operator theory, e.g. splitting methods \cite[\S~26]{bauschke2011convex}. Moreover, it facilitates distributed implementations, since communication in multi-agent systems is naturally a discrete process.


\medskip
\textit{Notation:}
Given a positive definite matrix $P\!=\!P^\top$, $\|x\|_P = \sqrt{x^T P x}$, it is understood that $\|x\| \!= \!\|x\|_I$. The largest (smallest) eigenvalue of $P$ is denoted by $\eigmax{P}$, ($\eigmin{P}$), and the condition number by $\kappa(P)\! =\! \frac{\eigmax{P}}{\eigmin{P}}$. Given a closed convex set $\Omega \subseteq \reals^n$, $\iota_{\Omega}:\reals^n \to \{0,\infty\}$ denotes its indicator function, $\nc_{ \Omega}(x):\Omega \mto \reals^n$ denotes its normal cone operator, and $\proj_{\Omega}:\reals^n \to \Omega$ is the Euclidean projection onto $\Omega$. A set-valued mapping $\mc B:\reals^n \mto \reals^n$ is $\mu$-strongly monotone, with $\mu >0$, if $(u-v)^\top (x-y) \geq \mu \left\| x-y \right\|^2$ for all $x \neq y \in \R^n$, $u \in \mathcal{B} (x)$, $v \in \mathcal{B} (y)$, and monotone if $\mu = 0$. For a convex function $f:\reals^n \to \reals$, $\partial f:\reals^n \mto \reals^n$ denotes the subdifferential mapping in the sense of convex analysis; $\prox_{ f}$ its proximal operator, s.t. $\prox_{ f}(x) \!=\! \argmin_{\xi}  f(\xi)+\frac{1}{2} \|x-\xi \|^2$ $\forall x \in \R^n$. A function $\gamma:\R_{\geq 0} \rightarrow \R_{\geq 0} $ is a $\mc K$-function if it is continuous, strictly increasing and $\gamma(0)=0$. We denote signals by $\mc{L}^n = \{f: \reals_{\geq0} \to \reals^n\}$. Given $\xi\in \reals^n$ and $f:\reals^n \to \reals^n$, a signal $x\in \mc{L}^n$ is a (Carathéodory) solution of the initial value problem (IVP) $\dot x(t) = f(x(t))$, $x(0) = \xi $ if it is uniformly continuous, satisfies the initial condition and the differential equation almost everywhere (in time). 


\section{Problem Setting}
In this paper, we consider the following nonlinear continuous-time state space system
\begin{subequations} \label{eq:dynamics}
\begin{align}
\dot x(t) &= f(x(t),u(t),w(t))\\
y(t) &= g(x(t),w(t))
\end{align}
\end{subequations}
where $x\in \mc{L}^{n_x}$ is the state, $u\in \mc{L}^{n_u}$ the control input, $w\in \mc{L}^{n_w}$ an exogenous disturbance, and $y\in \mc{L}^{n_y}$ the output.

\begin{assumption} \label{ass:sys}
(i) $f$ is locally Lipschitz continuous, and $g$ is $\ell_{\text g}$-globally Lipschitz continuous w.r.t. its first argument. (ii) For all $ t\geq 0$, $u(t)\in \mc{U}$ where $\mc{U}\subset \reals^{n_u}$ is compact and convex. (iii) For all $ t\geq 0$, $w(t) \in \mc{W}$ where $\mc{W}\subset \reals^{n_w}$ is compact (iv) $\dot{w}\in \mc{L}^{n_w}$ is essentially bounded.
{\hfill $\square$}
\end{assumption}
We assume that \eqref{eq:dynamics} is pre-stabilized in the following sense.
\begin{assumption} \label{ass:ctime_lyapunov}
There exists a continuously differentiable steady-state mapping $x_{\text{ss}}:\mc{U}\times \mc{W}\to \reals^{n_x}$, such that $\forall u\in \mc{U},w\in \mc{W}$, $f(x_{\text{ss}}(u,w),u,w) = 0$. Moreover, there exists a continuously differentiable function $V:\reals^{n_x}\times\mc{U}\times\mc{W} \to \reals$, and constants $\mu,\alpha_1,\alpha_2> 0$, and a $\mc K$-function $\sigma_{\text c}$ such that:
\begin{enumerate}[(i)]
\item for all $x\in \reals^{n_x},u\in \mc{U}$ and $w\in \mc{W}$, 
\begin{equation*}
    \alpha_1 \|x - x_{\text{ss}}(u,w)\|^2 \leq V(x,u,w)\leq \alpha_2 \|x-x_{\text{ss}}(u,w)\|^2
\end{equation*}
\item For any constant $u\in \mc{U}$, $\dot{V}(t) \leq -\mu V(t) + \sigma_{\text c}(\|\dot w(t)\|)$, where $V(t) = V(x(t),u,w(t))$ and $x$ satisfies \eqref{eq:dynamics}. {\hfill $\square$}
\end{enumerate}
\end{assumption}
Under Assumption \ref{ass:ctime_lyapunov}, a steady-state input-output mapping for the system \eqref{eq:dynamics} exists and we denote it by
\begin{equation} \label{eq:IOssMapping}
    h(u,w) = g(x_{\text{ss}} (u,w),w).
\end{equation}
Finally, we make the following assumptions about the availability of measurements.
\begin{assumption}
The output $y$ is measured and the exogenous disturbance $w$ in unmeasured.
{\hfill $\square$}
\end{assumption}

\begin{remark}
The inability to measure $w$ is common in areas such as power systems, where $w$ represents variable microgeneration and loads caused by consumers drawing power from the grid. In other cases, $w$ may be partially measured. For example, in the smart buildings example in Section~\ref{ss:numerical_example1}, the ambient temperature is measured while the solar heat flux and the heat released by building occupants are not.
Measured portions of $w$ should be included in $y$.
\end{remark}

\begin{figure}[t]
    \centering
    \includegraphics[width=0.8\columnwidth]{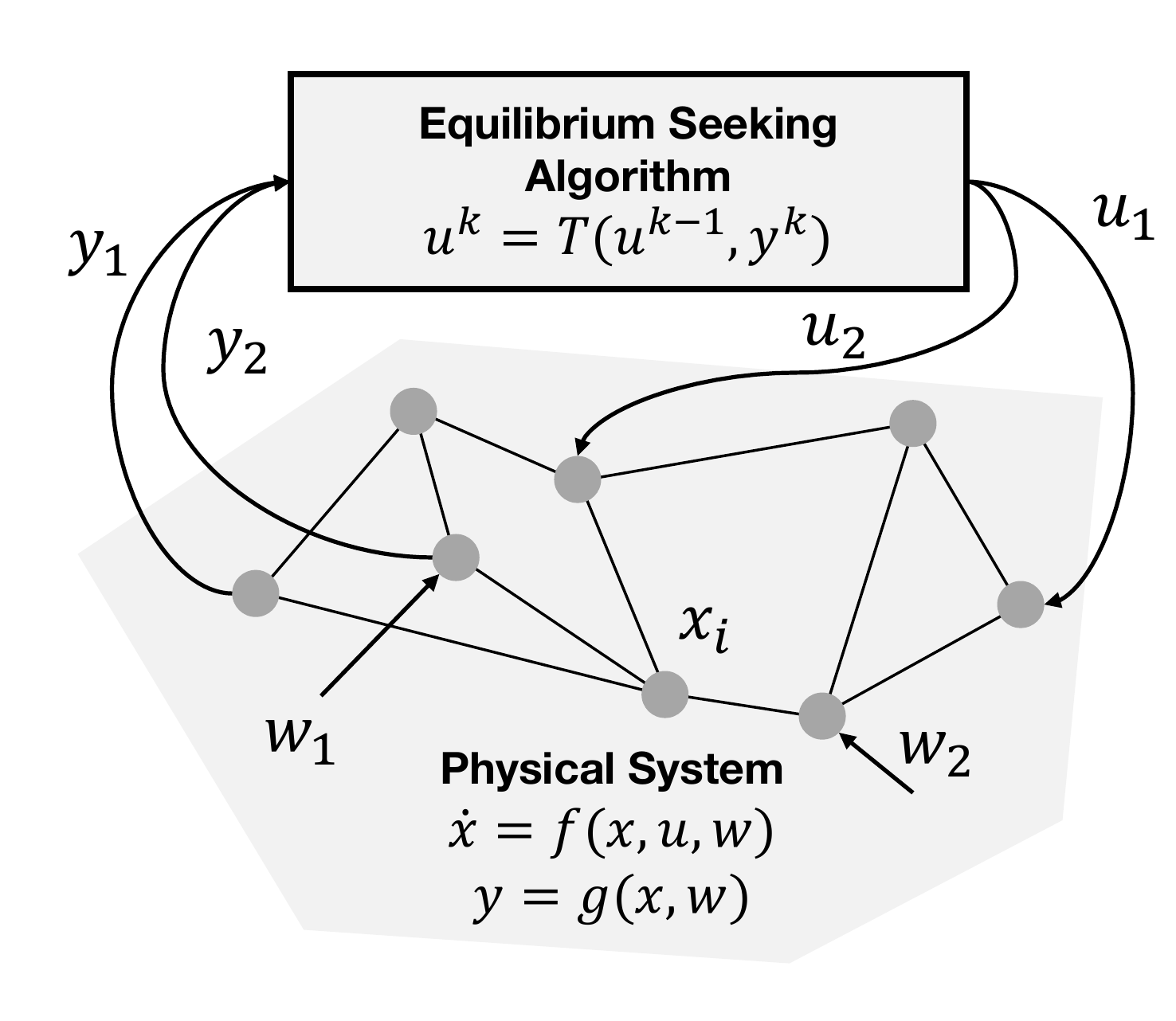}
    \caption{In feedback equilibrium seeking, measurements from a dynamical system are incorporated into an equilibrium seeking algorithm resulting in a coupled sampled-data system.}
    \label{fig:block-diagram}
\end{figure}

\smallskip
Our objective is designing a feedback controller that will drive \eqref{eq:dynamics} to ``efficient'' operating conditions. We use the following parameterized GE to encode efficiency:
\begin{subequations}
\label{eq:GE}
\begin{align} 
 0 &\in F(u,y) + \mc B(u),\\
 y &= h(u,w),
\end{align}
\end{subequations}
where $F: \R^{n_u } \!\times\! \R^{n_y}  \!\to\! \reals^{n_u}$ is continuously differentiable and $\mc{B}:\mc{U}\mto \reals^{n_u}$ is a set-valued mapping. 

GEs can be used to encode a variety of useful objectives, notably constrained optimization problems or (generalized) Nash games. Two relevant examples are provided in Sections~\ref{ex:opt_zeros} and \ref{ex:game_zeros} below.

Mathematically, we formalize the problem as follows: Design an output feedback controller that will drive \eqref{eq:dynamics} to solution trajectories $y^*(w(t))$ and $u^*(w(t))$ of \eqref{eq:GE} where
\begin{equation} \label{eq:solution_mapping}
    \begin{bmatrix}
        u^*(w)\\ y^*(w)
    \end{bmatrix} = \left \{\begin{matrix}
      u \in \mc{U}  ,\\ y\in \reals^{n_y}	
    \end{matrix} ~\bigg|~
    \begin{matrix}
        F(u,y)+ \mc{B}(u)\ni 0,\\ y = h(u,w)
    \end{matrix} \right\}
\end{equation}
is the parameter-to-solution mapping. To ensure this problem is well posed, we impose the following conditions on \eqref{eq:GE}. 
\begin{assumption} \label{ass:existence}
The following hold:
\begin{enumerate}[(i)]
\item $u^*$ and $y^*$ are functions (i.e., single valued);
\item $\exists \ell_{u^*} > 0$ such that $\|u^*(w')- u^*(w)\| \leq \ell_{u^*} \|w'-w\|$ for all $w, w' \in \mc{W}$. {\hfill $\square$}
\end{enumerate}  
\end{assumption}
Conditions under which Assumption~\ref{ass:existence} holds are provided on a per example basis.

\subsection{Non-Smooth Convex Feedback Optimization} \label{ex:opt_zeros}
Consider the parameterized optimization problem:
\begin{subequations} \label{eq:OPT}
\begin{align} \label{eq:CF}
    \min_{u,y}   &\quad  \varphi_1(u,y) + \varphi_2(u) \\
    \text{s.t.} & \quad  y = h(u,w), \label{eq:Constr-1}
\end{align}
\end{subequations}
where $\varphi_1$ and $h$ are continuously differentiable, $h(u,w) = h_u(u) + h_w(w)$, and $\varphi_2:\reals^{n_u}\to \reals \cup \{\infty\}$ is a proper lower semicontinuous convex function (e.g. $\varphi_2 = \iota_{\mc U}$, with $\mc U $ closed convex). By substituting \eqref{eq:Constr-1} into \eqref{eq:CF}, we get 
\begin{align} \label{eq:UOP}
    \min_u  &\quad \varphi_1(u,h(u,w))+  \varphi_2(u).
\end{align}
If $\varphi_1(u,h(u,w))$ is convex in $u$, the following parametrized GE is necessary and sufficient for optimality of $\bar u \in \reals^{n_u}$
\begin{align} \label{eq:OFOge}
   0 & \in F (\bar u,h(\bar u,w)) + \partial \varphi(\bar u),
\end{align}
where $F(u,y) = \nabla_u \varphi_1(u,y) + \nabla{h_u(u)}^\top \nabla_y \varphi_1(u,y)$.

Further, strong convexity of $\varphi_1(u,h(u,w))$ with respect to $u$ is sufficient for Assumption~\ref{ass:existence}, see e.g., \cite[Th.~ 2B.1]{dontchev2009implicit}.

\subsection{Feedback NE Seeking in Monotone Games} \label{ex:game_zeros} 
Consider a set of agents (i.e., players), $\mc I = \{1,\ldots,N \}$, where each agent $i \in \mc I$ shall choose a control input (i.e., strategy or action) $u_i \in \mc U_i$, where $\mc U_i\subseteq \R^{n_i}$ is a closed convex strategy set. Often, see e.g., \cite{stankovic2011distributed}, the dynamics of the agents are decoupled and \eqref{eq:dynamics} can be expressed as
\begin{equation}
   \forall i\in \mc I:
   \begin{cases}
    \dot{x}_i(t) = f_i(x_i(t),u_i(t),w_i(t)),\\
    y_i(t) = g_i(x_i(t),w_i(t)),
    \end{cases}
\end{equation}
inducing the local steady-state mappings $h_i(u_i,w_i)$. Moreover, each agent $i$ has a cost function $J_i(u_i,h(u,w))$ that depends on its own action and the actions of other agents\footnote{We adopt the game-theoretic notation $u\! =\! [u_i]_{i\in \mc{I}}\! :=\! [u_1^T, \ldots, u_N^T]^T$, $u_{-i} \!:= \![u_j]_{j\in \mc{I}\setminus \{i\}}$, $(u_i, u_{-i}) \!:=\! u$; further, $h(u,w)\!=\![h_i(u_i,w_i)]_{i \in \mc I}$.}.

Each agent is self-interested and wants to optimize its own objective; the resulting collection of the $N$ parametrized inter-dependent optimization problems constitutes a game:
\begin{align} \label{eq:Game}
\forall i \in \mc I: 
\begin{array}{r l} \displaystyle
  \min_{u_i \in \mc{U}_i}   &  \left\{J_i(u_i,y)~\big |~y = h(u,w)\right\}.
\end{array}
\end{align} 
From a game-theoretic perspective, a relevant solution concept for \eqref{eq:Game} is the Nash equilibrium (NE), where no agent can unilaterally reduce its cost, see e.g. \cite[\S~1]{scutari2014real}.

If each $J_i(u_i,h(u,w))$ is convex and continuously differentiable in $u_i$, a strategy profile $\bar u = [\bar u_i]_{i\in \mc{I}}$ is a NE of the game in \eqref{eq:Game} if and only if it is a solution to the following parametrized GE (or variational inequality) \cite[Cor.~3.4]{facchinei2010generalized}:
\begin{align} \label{eq:NEge}
0 \in F(\bar u,h(\bar u,w)) + \mc N_{\mc U}(\bar u),
\end{align}
where $\mc N_{\mc U} = \partial \iota_{\mc U}$, $\mc U := \prod_{i \in \mc I} \mc U_i$, and $F = [F_i]_{i\in \mc{I}}$ is the so-called pseudo-gradient mapping, whose components are
\begin{align*} 
F_i(u_i,y) := \nabla_{u_i} J_i(u_i,y) + \nabla_{u_i} {h_i(u_i,w_i)}^\top \nabla_{y_i} J_i(u_i,y).
\end{align*}
In this context, a sufficient condition for Assumption~\ref{ass:existence} to hold is strong monotonicity of the pseudo-gradient $F$ in $u$, and Lipschitz continuity of $F$ w.r.t. $w$ \cite[Theorem 2F.6]{dontchev2009implicit}.

\section{Control Strategy}
Our objective is to maintain \eqref{eq:dynamics} near efficient operating points, as defined in \eqref{eq:solution_mapping}. If $w$ were measurable, then  picking $u(t) = u^*(w(t))$ would cause \eqref{eq:dynamics} to track the solution trajectory $y^*$ with bounded error. However, $u^*$ is rarely available in closed form and in practice $u^*(w)$ is computed numerically using an equilibrium seeking algorithm. In this paper, we illustrate how to construct a feedback controller by incorporating measurements into these algorithms. 

Consider a class of abstract algorithms for solving \eqref{eq:GE}
\begin{align} \label{eq:FPI-opt}
\begin{array}{l}
 u^{k+1} =  T(u^k, h(u^k,w)),
\end{array}
\quad (\forall k \in \bb N)
\end{align}
where $T: \mc \R^{n_u} \times \R^{n_y} \rightarrow \R^{n_u}$ is the rule for generating the next iterate. We assume that $\eqref{eq:FPI-opt}$ converges linearly to $u^*(w)$, whenever $w$ is 	measured and held constant, and that $T$ is well-behaved in a parameterized setting. 
\begin{assumption}\label{ass:optRoutine}
Define $\tilde T(u,w):= T(u,h(u,w))$; then, for all fixed $w \in \mc W $, the following holds: 
\begin{enumerate}[(i)]
    \item $u^*(w) = \tilde T(u^*(w),w)$;
    \item $\exists P=P^\top \succ 0$ and $c_{\text T} \in (0,1)$ such that, for all $ u \in \mc{U}$,  $\|\tilde T(u,w) - u^*(w) \|_P \leq c_{\text T} \| u-u^*(w)\|_P$.
    {\hfill $\square$}
\end{enumerate}
\end{assumption} 
\begin{assumption} \label{ass:T_lipschitz}
For all $ u \in \mc \R^{n_u}$, there exists a constant $\ell_{\text T}$ s.t. $\|T(u,y)-T(u,y')\| \leq \ell_{\text T} \| y-y'\|$, $\forall y,y' \in \mathbb R^{n_y}$.
{\hfill $\square$}
\end{assumption}

In our problem setting, $w$ is not measurable (nor constant), so we run the algorithm \eqref{eq:FPI-opt} in parallel with the plant and replace evaluations of the steady-state map $h$ with measurements of $y$ obtained from the system. This \textit{online-feedback-equilibrium-seeking} strategy renders the optimization routine robust to unmeasured $w$ and to modelling errors\footnote{While we do not explicitly address the issue here, feedback optimization is known to be robust to modelling errors, see e.g., \cite{colombino2019towards,ortmann2020experimental}.}.

Since our algorithm is discrete, we adopt a sampled-data strategy combined with a zero-order hold. Let $\tau \!> \!0$ be the sampling period and let $t^{k} = k\tau$ denote the sampling instants. This results in the sampled-data closed-loop system%
\begin{subequations} \label{eq:sampled-data-system}
\begin{align} 
\Sigma_1^s:&\begin{cases} \label{eq:sys1}
    ~\dot x(t) = f(x(t),u(t),w(t)),\\
    ~y(t) = g(x(t),w(t)).
\end{cases}\\
\label{eq:hybrid1}
\Sigma_2^s: &\begin{cases} 
~u^{k} = (1\!-\!\varepsilon)u^{k-1}+\varepsilon \,T(u^{k-1},y(t^{k})),\\
    ~u(t) = u^k,~~\forall t\in [t^k,t^{k+1}),
\end{cases}
\end{align}
\end{subequations}
where $\varepsilon \in (0,1]$ is a relaxation parameter used to regulate the control action generated by \eqref{eq:FPI-opt}. We next provide two concrete examples of equilibrium seeking algorithms before proceeding to a closed-loop stability analysis.

\begin{remark}
Using algorithms (as opposed to continuous flows) is especially important in distributed systems where each iteration requires communications between agents.
\end{remark}

\begin{remark}
Implementing FES only requires the ability to measure $y$ and evaluate $T$. In Theorem~\ref{thm:sampled_data_stability}, we derive a small-gain type stability condition, formally checking it requires additional side information about the algorithm (e.g., convergence rate $c_{\text T}$,  Lipschitz constant $\ell_{\text T}$) and the physical system (e.g., Lyapunov decay $\mu$, etc.).
\end{remark}

\subsection{Online FO using Proximal-Gradient Descent} \label{ex:gradient_descent}
Consider the optimization problem in Section~\ref{ex:opt_zeros}. Assume that $\varphi_1(u,h(u,w))$ is $m$-strongly convex with respect to $u$, and $F$ is globally $\ell$-Lipschitz continuous. Then, one algorithm satisfying Assumptions~\ref{ass:optRoutine} and \ref{ass:T_lipschitz} is the proximal-gradient method (see e.g., \cite[Prop.~26.16]{bauschke2011convex}):
\begin{align} \textstyle \label{eq:PGM}
 T(u,y) = \textup{prox}_{\gamma \varphi_2}
    \left(
    u - \gamma F(u,y)
    \right),
\end{align}
where $ \gamma \in (0, 2m/\ell^2)$ is the step size. In particular, Assumption~\ref{ass:optRoutine}~(ii) is satisfied with $c_{\text T} = \sqrt{1-\gamma(2m - \gamma \ell^2)}$.

\subsection{Online Feedback NE seeking using Best Responses} \label{ex:best_response}
Consider the game in Section \ref{ex:game_zeros}. We assume that each agent measures its own disturbance $w_i$ (i.e., $w_i$ is included in $y_i$) but does not share this information with other agents\footnote{Such information locality is often typical for self-interested agents that do not readily share private information.}. Further, since the local agents' dynamics are decoupled, $h = [h_i]_{i\in \mc{I}}$, it is possible to define functions $\tilde J_i$ such that
\begin{equation}
\tilde J_i(u_i,y_i,y_{-i}) := J_i \big( u_i,(h_i(u_i,w_i),y_{-i}) \big).
\end{equation}

If $\tilde J_i$ is strongly convex in $u_i$, a suitable algorithm is each agent $i \in \mc I$ playing the best response:
\begin{align} \label{eq:br-dyn} \displaystyle
\forall i \in \mc I: \quad T_i(w_i,y) = \underset{\xi \in \mc U_i}{\argmin}\; \tilde J_i(\xi,y_i,y_{-i}), 
\end{align}
resulting in an overall algorithm mapping $T = [T_i]_{i\in \mc{I}}$. If the game primitives satisfy the technical condition in \cite[Def.~8~(iii)]{scutari2014real}, then the best-response dynamics \eqref{eq:br-dyn} converge to a NE of \eqref{eq:Game} \cite[Th.~8]{scutari2014real} with linear rate \cite[Rem.~13]{scutari2014real}. Further, if the gradient mapping $\nabla_{\xi} \tilde{J}_i(\xi,y_i,y_{-i})$ is Lipschitz continuous with respect to $y_i$ and $y_{-i}$, then $T_i$ is Lipschitz and Assumption~\ref{ass:T_lipschitz} is satisfied \cite[2B.1]{dontchev2009implicit}.

\section{Stability and Tracking Analysis}

To analyze the closed loop \eqref{eq:sampled-data-system}, we sample the continuous-time system  \eqref{eq:sys1} to form the following discrete-time system%
\begin{subequations}  \label{eq:discrete-time-system}
\begin{align}
    \Sigma_1^d:&\begin{cases}   \label{eq:sys2d}
    ~ x^{k+1} = \psi(t^k,x^k,u^{k},w),\\
    ~ y^k = g(x^k,w^k),
    \end{cases}\\
\Sigma_2^d: &\begin{cases} \label{eq:sys1d}
    ~ u^{k+1} = (1\!-\!\varepsilon)u^k + \varepsilon \,T(u^k,y^{k+1}),
    \end{cases}
\end{align}
\end{subequations}
where $\psi:\reals \times \reals^{n_x} \times \reals^{n_u} \times \mc{L}^{n_w}\to \reals^{n_x}$ and $\psi(t^0,\xi^0,v,\omega)$ denotes the solution of the IVP
\begin{equation}
	\dot{\xi}(t) = f(\xi(t),v,\omega(t)),~~\xi(t^0) = \xi^0
\end{equation}
at time $t = t_0 + \tau$, where $\tau$ is the sampling period.
We begin by establishing the properties of the sampled plant.

\begin{lemma} \label{th:W}
Let Assumption~\ref{ass:ctime_lyapunov} hold. Given a sampling period $\tau > 0$, the system $x^{k+1} = \psi(t^k,x^k,u^{k},w)$ satisfies
\begin{equation}
    W^{k+1} \leq c_{\text W} W^k + c_{\text W} \ell_{\text W}\|u^{k+1} - u^{k}\| + \sqrt{\tau}\sigma(z^{k}),
\end{equation}
where $W \!:=\! \sqrt{V}$,  $W^k \!:=\! W(x^{k},u^{k},w^{k})$, $c_{\text W} \!:=\! \sqrt{\frac{\alpha_2}{\alpha_1}} e^{-\frac{\tau\mu}{2}}\!$, $\ell_{\text W}:=\sqrt{\alpha_1}\ell_{\text x}$ (with $\ell_{\text x}$ Lipschitz constant of $x_{\text{ss}}$ w.r.t. $u$), $\sigma := \sqrt{\sigma_{\text c}}$ is a $\mc K$-function,
and $z^k := \sup_{t\in [t^k,t^{k+1}]} \|\dot w(t)\|$.
\end{lemma}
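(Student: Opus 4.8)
The plan is to exploit that over each sampling interval $[t^k,t^{k+1})$ the control input is frozen at $u^k$, so Assumption~\ref{ass:ctime_lyapunov}(ii) applies verbatim along the plant trajectory on that interval; I then convert the resulting continuous‑time exponential decay into the claimed one‑step discrete inequality in three moves: a comparison/Grönwall estimate, subadditivity of the square root, and a triangle‑inequality correction accounting for the fact that $W^{k+1}$ is anchored at the setpoint $x_{\text{ss}}(u^{k+1},\cdot)$ while the interval dynamics are anchored at $x_{\text{ss}}(u^k,\cdot)$.

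Concretely, fix $k$ and let $x(\cdot)$ be the solution of $\dot x = f(x,u^k,w)$ on $[t^k,t^{k+1}]$ with $x(t^k)=x^k$, so that $x(t^{k+1})=x^{k+1}$; by Assumption~\ref{ass:sys}(i),(iv) this solution exists and $w$ is absolutely continuous on the interval. Put $V(t):=V(x(t),u^k,w(t))$. Since $u^k$ is constant, Assumption~\ref{ass:ctime_lyapunov}(ii) gives $\dot V(t)\le-\mu V(t)+\sigma_{\text c}(\|\dot w(t)\|)$ for a.e.\ $t$, and the comparison lemma yields
\[
 V(t^{k+1}) \le e^{-\mu\tau}V(t^k) + \int_{t^k}^{t^{k+1}} e^{-\mu(t^{k+1}-s)}\,\sigma_{\text c}(\|\dot w(s)\|)\,ds .
\]
Bounding $\sigma_{\text c}(\|\dot w(s)\|)\le\sigma_{\text c}(z^k)$ by monotonicity of the $\mc K$-function $\sigma_{\text c}$, and $e^{-\mu(t^{k+1}-s)}\le 1$ over an interval of length $\tau$, the integral is at most $\tau\,\sigma_{\text c}(z^k)$; since $V(t^k)=V(x^k,u^k,w^k)=(W^k)^2$ this gives $V(t^{k+1})\le e^{-\mu\tau}(W^k)^2+\tau\,\sigma_{\text c}(z^k)$. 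Applying $\sqrt{a+b}\le\sqrt a+\sqrt b$ yields $\sqrt{V(t^{k+1})}\le e^{-\mu\tau/2}W^k+\sqrt{\tau}\,\sigma(z^k)$ with $\sigma=\sqrt{\sigma_{\text c}}$ (still a $\mc K$-function). Finally, to pass from $\sqrt{V(t^{k+1})}=\sqrt{V(x^{k+1},u^k,w^{k+1})}$ to $W^{k+1}=\sqrt{V(x^{k+1},u^{k+1},w^{k+1})}$, use the upper quadratic bound on $V(\cdot,u^{k+1},\cdot)$, then the triangle inequality with the Lipschitz constant $\ell_{\text x}$ of $x_{\text{ss}}$ in $u$, then the lower quadratic bound on $V(\cdot,u^k,\cdot)$:
\[
 W^{k+1}\le\sqrt{\alpha_2}\,\|x^{k+1}-x_{\text{ss}}(u^{k+1},w^{k+1})\| \le \sqrt{\alpha_2}\Big(\tfrac{1}{\sqrt{\alpha_1}}\sqrt{V(t^{k+1})}+\ell_{\text x}\|u^{k+1}-u^k\|\Big),
\]
and substituting the previous display, after collecting the $\sqrt{\alpha_1},\sqrt{\alpha_2},e^{-\mu\tau/2}$ factors into $c_{\text W}$ and $\ell_{\text W}$, produces the asserted bound.

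The delicate part is this last step: the one‑step flow naturally certifies decay toward the \emph{old} setpoint $x_{\text{ss}}(u^k,\cdot)$, whereas the Lyapunov estimate for the next iterate is referenced to the \emph{new} setpoint $x_{\text{ss}}(u^{k+1},\cdot)$, and one must route the conditioning factor $\sqrt{\alpha_2/\alpha_1}$ and the decay factor $e^{-\mu\tau/2}$ through the triangle inequality so that the $\|u^{k+1}-u^k\|$ and $\sigma(z^k)$ terms carry exactly the coefficients $c_{\text W}\ell_{\text W}$ and $\sqrt\tau$ stated (using, where needed, $\tfrac{1-e^{-\mu\tau}}{\mu}\le\tau$ to tighten the integral bound). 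A secondary technical point is making the comparison step rigorous for a merely Carathéodory solution — absolute continuity of $t\mapsto V(t)$ along the trajectory, a.e.\ differentiability, and validity of the integral form of Grönwall's inequality — which is precisely what Assumption~\ref{ass:sys}(i),(iv) are there to guarantee.
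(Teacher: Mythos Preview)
Your overall strategy --- integrate the differential inequality for $V$ over the sampling interval, take a square root, then correct for the change of reference from $u^k$ to $u^{k+1}$ --- is exactly the paper's. The gap is \emph{where} you perform the reference switch: you do it at $x^{k+1}$, after the decay step, and this ordering does not yield the constants stated in the lemma. Substituting your bound $\sqrt{V(t^{k+1})}\le e^{-\mu\tau/2}W^k+\sqrt\tau\,\sigma(z^k)$ into $W^{k+1}\le\sqrt{\alpha_2}\bigl(\alpha_1^{-1/2}\sqrt{V(t^{k+1})}+\ell_{\text x}\|u^{k+1}-u^k\|\bigr)$ gives
\[
W^{k+1}\;\le\; c_{\text W}W^k \;+\; \sqrt{\alpha_2/\alpha_1}\,\sqrt\tau\,\sigma(z^k)\;+\;\sqrt{\alpha_2}\,\ell_{\text x}\,\|u^{k+1}-u^k\|,
\]
so the $\sigma$-term carries an extra factor $\sqrt{\alpha_2/\alpha_1}\ge 1$ and the input-increment coefficient is $\sqrt{\alpha_2}\,\ell_{\text x}$ rather than $c_{\text W}\ell_{\text W}=\sqrt{\alpha_2}\,e^{-\mu\tau/2}\ell_{\text x}$. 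These do \emph{not} ``collect'' into the claimed coefficients, and no sharpening of the integral bound (e.g.\ using $(1-e^{-\mu\tau})/\mu\le\tau$) fixes this, because the offending factors arise from applying the conditioning $\sqrt{\alpha_2/\alpha_1}$ \emph{after} the decay step.

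The paper gets the stated constants by performing the reference switch at the \emph{start} of the interval: it writes the one-step decay directly for $V(\cdot,u^{k+1},\cdot)$, bounds $V(x^k,u^{k+1},w^k)\le\alpha_2\|x^k-x_{\text{ss}}(u^{k+1},w^k)\|^2$, applies the triangle inequality and the Lipschitz continuity of $x_{\text{ss}}$ at $x^k$, and only then takes the square root. In that order, $e^{-\mu\tau/2}$ multiplies the $\ell_{\text x}\|u^{k+1}-u^k\|$ term and the $\sqrt\tau\,\sigma(z^k)$ term is never touched by $\sqrt{\alpha_2/\alpha_1}$. (Note, though, that this route tacitly uses the decay of $V(\cdot,u^{k+1},\cdot)$ along a trajectory driven by $u^k$, which is a somewhat stronger reading of Assumption~\ref{ass:ctime_lyapunov}(ii) than the one you invoke.)
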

\begin{proof}
See appendix~\ref{proof:W}.
\end{proof}

Next, we show that, for an appropriate choice of the sampling time $\tau$ and the relaxation parameter $\varepsilon$, the discrete-time system \eqref{eq:discrete-time-system} is input-to-state stable (ISS) with respect to $\dot{w}$. Moreover, we derive an explicit expression for the asymptotic input-output gain, which characterizes the tracking performances of the system.

We begin with the following preparatory lemma.
\begin{lemma} \label{lem:ParamSel}
The matrix
\begin{align}
\label{eq:M}
  M:= 
  \left[
 \begin{smallmatrix}
1\!-\!\varepsilon(1\!-\!c_{\text T}) & \frac{\|P\| \ell_{\text T}\ell_{\text g} }{\sqrt{\alpha_1}}\, \varepsilon c_{\text W}  \\
  \|P^{-1}\|  (1\!+\!c_{\text T})  \varepsilon \ell_{\text W}  c_{\text W}
&  \left( 1+  \frac{ \ell_{\text W} \ell_{\text T} \ell_{\text g}}{\sqrt{\alpha_1}} \varepsilon c_{\text W} \right) c_{\text W}
\end{smallmatrix}
\right],
\end{align}
where $c_{\text W} \!:=\! \sqrt{\frac{\alpha_2}{\alpha_1}} \, e^{-\frac{\tau\mu}{2}}\!$, has spectral radius $\rho(M)<1$ if
\begin{equation} \label{eq:small_gain}
    %
    \tau \in (\underline{\tau}, \infty), \quad
    \textstyle
    \varepsilon \in \left(
    0, \min \left\{\overline{\varepsilon}(\tau), \ 1
    \right\}
    \right],
\end{equation}
with $\underline{\tau}=\frac{\log(\alpha_2/\alpha_1)}{\mu} $ and $\overline \varepsilon(\tau)=
    \frac{  e^{\frac{\tau \mu}{2}}
    \left( e^{\frac{\tau \mu}{2}}- \sqrt{\alpha_2/\alpha_1} \right) }{ \ell_{\text g} \ell_{\text x} \ell_{\text T}  \left( 1 + \kappa(P) \frac{(1+c_{\text T})}{(1-c_{\text T})}\right)\alpha_2/\alpha_1 }$.
\end{lemma}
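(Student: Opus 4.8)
The plan is to regard $M$ as an entrywise nonnegative matrix and reduce $\rho(M)<1$ to a single scalar inequality. Every constant in \eqref{eq:M} is positive, $c_{\text T}\in(0,1)$, $\varepsilon\in(0,1]$ and $c_{\text W}>0$, so $M_{12},M_{21},M_{22}\geq0$ and $M_{11}=1-\varepsilon(1-c_{\text T})\in(0,1)$; in particular $M\geq0$ entrywise. Its eigenvalues are $\lambda_\pm=\tfrac12\big(M_{11}+M_{22}\pm\sqrt{(M_{11}-M_{22})^2+4M_{12}M_{21}}\big)$, hence real, with $\rho(M)=\lambda_+$, and a direct manipulation of $\lambda_+<1$ shows it is equivalent to $M_{11}+M_{22}<2$ together with $\det(I-M)=(1-M_{11})(1-M_{22})-M_{12}M_{21}>0$. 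Since $M_{11}<1$ and since $\det(I-M)>0$ with $M_{12}M_{21}\geq0$ forces $1-M_{22}$ to share the (positive) sign of $1-M_{11}$ --- hence $M_{22}<1$ and $M_{11}+M_{22}<2$ --- the whole lemma collapses to proving $\det(I-M)>0$.

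I would then expand $\det(I-M)=(1-M_{11})(1-M_{22})-M_{12}M_{21}$ directly from \eqref{eq:M}, using the identities $\|P\|\,\|P^{-1}\|=\kappa(P)$ (valid since $P=P^\top\succ0$) and $\ell_{\text W}=\sqrt{\alpha_1}\,\ell_{\text x}$, so that $\ell_{\text W}/\sqrt{\alpha_1}=\ell_{\text x}$. With $1-M_{11}=\varepsilon(1-c_{\text T})$ and $1-M_{22}=1-c_{\text W}-\ell_{\text g}\ell_{\text x}\ell_{\text T}\,\varepsilon c_{\text W}^2$, factoring $\varepsilon(1-c_{\text T})$ out of the whole expression (the cross term then contributing $\kappa(P)\tfrac{1+c_{\text T}}{1-c_{\text T}}$) should give $\det(I-M)=\varepsilon(1-c_{\text T})\big[(1-c_{\text W})-\varepsilon\,\ell_{\text g}\ell_{\text x}\ell_{\text T}\,c_{\text W}^2\big(1+\kappa(P)\tfrac{1+c_{\text T}}{1-c_{\text T}}\big)\big]$. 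Since $\varepsilon(1-c_{\text T})>0$, this is positive exactly when $\varepsilon<(1-c_{\text W})\big/\big(\ell_{\text g}\ell_{\text x}\ell_{\text T}\,c_{\text W}^2\,(1+\kappa(P)\tfrac{1+c_{\text T}}{1-c_{\text T}})\big)$, and substituting $c_{\text W}^2=\tfrac{\alpha_2}{\alpha_1}e^{-\tau\mu}$ together with $1-c_{\text W}=e^{-\tau\mu/2}\big(e^{\tau\mu/2}-\sqrt{\alpha_2/\alpha_1}\big)$ rewrites this threshold as exactly $\overline\varepsilon(\tau)$.

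The role of $\tau>\underline\tau$ is then transparent: it is equivalent to $e^{\tau\mu/2}>\sqrt{\alpha_2/\alpha_1}$, i.e. to $c_{\text W}<1$, which is itself necessary for $\rho(M)<1$ --- if $c_{\text W}\geq1$ then $M_{22}\geq c_{\text W}\geq1$, and a diagonal entry of a nonnegative matrix never exceeds its spectral radius, so $\rho(M)\geq1$. Conversely, $c_{\text W}<1$ makes $1-c_{\text W}>0$, hence $\overline\varepsilon(\tau)>0$ and the interval for $\varepsilon$ nonempty, and then any $\varepsilon\in(0,\min\{\overline\varepsilon(\tau),1\}]$ yields $\det(I-M)>0$ by the previous step; the bound $\varepsilon\leq1$ only serves to keep $M_{11}\geq0$ (so the nonnegative-matrix criterion applies) and to match the relaxation parameter in \eqref{eq:sampled-data-system}.

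I do not foresee a conceptual obstacle; the only real work is the determinant expansion, where the delicate points are (i) not discarding the cross term $M_{12}M_{21}$ and extracting the factor $(1-c_{\text T})^{-1}$ correctly so that the pattern $1+\kappa(P)\tfrac{1+c_{\text T}}{1-c_{\text T}}$ emerges, and (ii) using $\|P\|\,\|P^{-1}\|=\kappa(P)$ and $\ell_{\text W}=\sqrt{\alpha_1}\,\ell_{\text x}$ to recognize the resulting threshold as $\overline\varepsilon(\tau)$. A minor caveat worth recording: the expansion gives strict positivity of $\det(I-M)$ precisely for $\varepsilon<\overline\varepsilon(\tau)$, so at the endpoint $\varepsilon=\overline\varepsilon(\tau)$ one has $\rho(M)=1$; the bound should therefore be read strictly (equivalently, replace the stated closed interval for $\varepsilon$ by the corresponding half-open one).
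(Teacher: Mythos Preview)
Your approach is essentially the same as the paper's: the paper notes that $M$ is a positive matrix, invokes Perron--Frobenius to conclude $\rho(M)=\lambda_{\max}(M)\in\reals_+$, and then states that the bounds \eqref{eq:small_gain} follow ``by a direct computation of $\lambda_{\max}(M)$''. You carry out that computation explicitly, via the equivalent reduction $\rho(M)<1\Leftrightarrow \det(I-M)>0$ (given $M_{11}<1$), and your determinant expansion and identification of $\overline\varepsilon(\tau)$ are correct; your observation that the right endpoint $\varepsilon=\overline\varepsilon(\tau)$ yields $\rho(M)=1$ (so the interval should be half-open when $\overline\varepsilon(\tau)\leq1$) is also valid and is a detail the paper's sketch does not address.
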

\begin{proof}
See Appendix~\ref{proof:ParamSel}.
\end{proof}

Now, we are ready to establish conditions for ISS of \eqref{eq:discrete-time-system}.
\begin{theorem} \label{th:ATE}
Let Assumptions~\ref{ass:sys}--\ref{ass:T_lipschitz} hold, and assume that $\tau$ and $ \varepsilon$ satisfy \eqref{eq:small_gain}. Then, the system \eqref{eq:discrete-time-system} is ISS \cite[Def.~3.1]{jiang2001input} w.r.t. $ z^k = {\sup}_{t \in [t^k,t^{k+1}]}  \|\dot w(t)\|$, i.e., there exist constants $\eta_1, \eta_2 \in \mathbb{R}_+$ and $c_{\text M} \in [0,1)$ such that
\begin{align} 
        \left\| \left[ 
 \begin{matrix}
        \delta x^{k}\\
        \delta u^{k}
    \end{matrix} 
    \right]
    \right\|
    &\leq  
    \eta_1 \, (c_{\text M})^k  \left\| \left[ \begin{matrix}
       \delta x^0\\
        \delta u^0
     \end{matrix} 
    \right] \right\| + \gamma \left( \sup_{0 \leq s \leq k-\!1} \|z^s\| \right),
    \label{eq:thISS}
\end{align}
where $\delta x^k := x^{k}-x_{\text{ss}}(u^{k}, w^{k})$, $\delta u^k := u^k - u^*(z^k)$, 
and $\gamma $ is a $\mc{K}$-function defined as
\begin{equation} 
\label{eq:gamma}
    \gamma(\zeta) := \eta_2 \ \frac{c_{\text M}}{1- c_{\text M}}  \ \left\| \left[
    \begin{matrix}
   (\ell_{u^*} \tau) \zeta \\
    (\sqrt{\tau}/c_{\text W})\, \sigma(\zeta)
    \end{matrix}
    \right] \right\|.
\end{equation}
\end{theorem}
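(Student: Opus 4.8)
The plan is to combine the contraction estimate for the sampled plant (Lemma~\ref{th:W}) with the contraction estimate for the equilibrium-seeking iteration (Assumption~\ref{ass:optRoutine}), viewing the closed loop as two interconnected subsystems and applying a small-gain/comparison argument whose spectral-radius condition is exactly the one certified in Lemma~\ref{lem:ParamSel}. Concretely, I would introduce the two error coordinates $W^k = \sqrt{V(x^k,u^k,w^k)}$ (measuring $\delta x^k$ up to the constants $\alpha_1,\alpha_2$) and $\|\delta u^k\|_P = \|u^k - u^*(w^k)\|_P$, and derive a pair of scalar recursions of the form
\begin{equation*}
\begin{bmatrix} \|\delta u^{k+1}\|_P \\ W^{k+1} \end{bmatrix}
\ \leq\ M \begin{bmatrix} \|\delta u^{k}\|_P \\ W^{k} \end{bmatrix}
+ \begin{bmatrix} b_u^k \\ b_W^k \end{bmatrix},
\end{equation*}
where the inequality is entrywise, $M$ is the matrix in \eqref{eq:M}, and the forcing terms $b_u^k, b_W^k$ collect (a) the disturbance contribution $\sqrt{\tau}\,\sigma(z^k)$ from Lemma~\ref{th:W}, and (b) the drift of the moving target, $\|u^*(w^{k+1}) - u^*(w^k)\|$, which by Assumption~\ref{ass:existence}(ii) together with $\|w^{k+1}-w^k\| \le \tau z^k$ is bounded by $\ell_{u^*}\tau z^k$.

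The first recursion is obtained from $\Sigma_2^d$: write $u^{k+1} - u^*(w^{k+1}) = (1-\varepsilon)(u^k - u^*(w^k)) + \varepsilon(T(u^k,y^{k+1}) - u^*(w^k)) + (u^*(w^k)-u^*(w^{k+1}))$, then split $T(u^k,y^{k+1}) - u^*(w^k) = (T(u^k,y^{k+1}) - \tilde T(u^k,w^k)) + (\tilde T(u^k,w^k) - u^*(w^k))$; bound the first bracket with Assumption~\ref{ass:T_lipschitz} and $\|y^{k+1} - h(u^k,w^k)\| \le \ell_{\text g}\|\delta x^{k+1}\| \le (\ell_{\text g}/\sqrt{\alpha_1}) W^{k+1}$, and the second with Assumption~\ref{ass:optRoutine}(ii). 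Substituting the Lemma~\ref{th:W} bound for $W^{k+1}$ (and using $\|u^{k+1}-u^k\| \le (1+c_{\text T})\|\delta u^k\| + \ldots$, controlled via $\varepsilon$) produces the first row of $M$ and the off-diagonal coupling; the norm-equivalence constants $\|P\|,\|P^{-1}\|$ appear when passing between $\|\cdot\|_P$ and $\|\cdot\|$. The second recursion is Lemma~\ref{th:W} itself, with $\|u^{k+1}-u^k\|$ re-expressed through $\delta u^k$, $W^k$, and the disturbance — this is what generates the second row of $M$ and the $c_{\text W}\ell_{\text W}(1+c_{\text T})$-type entries.

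Once the entrywise recursion is in place, Lemma~\ref{lem:ParamSel} guarantees $\rho(M) < 1$ under \eqref{eq:small_gain}, so $M$ is Schur; I would then invoke a standard vector comparison lemma for positive linear systems (e.g. the discrete-time ISS-via-small-gain result of \cite{jiang2001input}) to conclude that the stacked error vector decays geometrically at rate $c_{\text M} \in [\rho(M),1)$ plus a $\mc K$-function of $\sup_{s\le k-1}\|z^s\|$ applied to the forcing vector $[(\ell_{u^*}\tau)\zeta,\ (\sqrt{\tau}/c_{\text W})\sigma(\zeta)]^\top$, which is precisely \eqref{eq:gamma}. Finally I would translate back from $(W^k, \|\delta u^k\|_P)$ to $(\delta x^k, \delta u^k)$ using Assumption~\ref{ass:ctime_lyapunov}(i) and the equivalence of $\|\cdot\|_P$ with $\|\cdot\|$, absorbing all constants into $\eta_1,\eta_2$. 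The main obstacle I anticipate is the bookkeeping in the first recursion: the term $\|\delta x^{k+1}\|$ (equivalently $W^{k+1}$) appears on the right-hand side of the $\delta u$-update because $y^{k+1}$ is measured after the plant has moved, so one must substitute the Lemma~\ref{th:W} bound and then collect terms carefully to avoid an implicit inequality — this is exactly why the relaxation parameter $\varepsilon$ must be taken small (the $\overline\varepsilon(\tau)$ bound), and getting the resulting coefficients to match \eqref{eq:M} entry-by-entry is the delicate part.
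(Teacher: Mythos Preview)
Your plan is essentially the paper's proof: the same two error coordinates $(\|\delta u^k\|_P,W^k)$, the same $2\times 2$ comparison recursion driven by $M$, and the same telescoping/Schur argument followed by norm equivalences. Two bookkeeping details in your outline would not work as written, and fixing them is precisely how the paper makes the recursion explicit and gets the entries of $M$ in \eqref{eq:M} to come out.

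First, in the $\delta u$-recursion you split at $\tilde T(u^k,w^k)$ and claim $\|y^{k+1}-h(u^k,w^k)\|\le \ell_{\text g}\|\delta x^{k+1}\|$. This fails: $y^{k+1}=g(x^{k+1},w^{k+1})$ while $h(u^k,w^k)=g(x_{\text{ss}}(u^k,w^k),w^k)$, so the $w$-arguments disagree, and Assumption~\ref{ass:sys} only gives Lipschitz continuity of $g$ in its \emph{first} argument. The paper instead splits at $\tilde T_\varepsilon(u^k,w^{k+1})$, so the relevant output error is $\|y^{k+1}-h(u^k,w^{k+1})\|\le \ell_{\text g}\|x^{k+1}-x_{\text{ss}}(u^k,w^{k+1})\|\le (\ell_{\text g}/\sqrt{\alpha_1})\,W(x^{k+1},u^k,w^{k+1})$, which is clean.

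Second, note that this last quantity is the Lyapunov function evaluated at the \emph{old} input $u^k$, not $W^{k+1}=W(x^{k+1},u^{k+1},w^{k+1})$. Rather than substituting the full Lemma~\ref{th:W} bound for $W^{k+1}$ (which reintroduces $\|u^{k+1}-u^k\|$ and creates the implicit loop you anticipate), the paper uses the intermediate inequality from the proof of Lemma~\ref{th:W} with $u=u^k$ held fixed, yielding directly $W(x^{k+1},u^k,w^{k+1})\le c_{\text W}W^k+\sqrt{\tau}\,\sigma(z^k)$ with no $\|u^{k+1}-u^k\|$ term. The same device is used when bounding $\|u^{k+1}-u^k\|$ itself. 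With these two adjustments your recursion becomes explicit in $(\|\delta u^k\|_P,W^k)$ and the coefficients match \eqref{eq:M}; the remainder of your plan coincides with the paper's argument.
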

\begin{proof}
See Appendix \ref{proof:ATE}.
\end{proof}

Next, we derive the asymptotic input-output gain of \eqref{eq:discrete-time-system}.
\begin{corollary} \label{cor:IOg}
Define the output error $\delta y^k  := y^k-y^*(w^k)$. If the preconditions of Theorem \ref{th:ATE} hold, then $\delta y^k$ satisfies
\begin{equation} \label{eq:AIOG}
    \limsup_{k\to\infty} \|\delta y^k\| \leq \gamma_{\text a}\left(\limsup_{k\to\infty}\|z^k\|\right),
\end{equation}
where the asymptotic input-output gain $\gamma_{\text a}\in \mc{K}$ is given by
$ \gamma_{\text a}(\zeta) :=   \ell_{g}(1+\ell_{\text x} ) \, \gamma(\zeta)$,
with $\gamma \in \mc K$ as in \eqref{eq:gamma}.
\end{corollary}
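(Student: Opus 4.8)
The plan is to relate the output error $\delta y^k$ to the state and input errors $\delta x^k$, $\delta u^k$ already controlled by Theorem~\ref{th:ATE}, and then propagate the ISS bound \eqref{eq:thISS} through a $\limsup$ as $k\to\infty$. First I would decompose $\delta y^k = y^k - y^*(w^k)$ by inserting the steady-state output $h(u^k,w^k) = g(x_{\text{ss}}(u^k,w^k),w^k)$ as an intermediate term:
\begin{equation*}
\delta y^k = \bigl(g(x^k,w^k) - g(x_{\text{ss}}(u^k,w^k),w^k)\bigr) + \bigl(h(u^k,w^k) - h(u^*(w^k),w^k)\bigr),
\end{equation*}
using $y^*(w^k) = h(u^*(w^k),w^k)$ from \eqref{eq:solution_mapping}. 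The first bracket is bounded by $\ell_{\text g}\|\delta x^k\|$ via the Lipschitz continuity of $g$ in its first argument (Assumption~\ref{ass:sys}(i)). For the second bracket I would use that $h(\cdot,w^k) = g(x_{\text{ss}}(\cdot,w^k),w^k)$ is a composition of an $\ell_{\text g}$-Lipschitz map with the $\ell_{\text x}$-Lipschitz steady-state map $x_{\text{ss}}$ in $u$, giving the bound $\ell_{\text g}\ell_{\text x}\|u^k - u^*(w^k)\| = \ell_{\text g}\ell_{\text x}\|\delta u^k\|$ (note $\delta u^k$ here is taken at argument $w^k$, consistent with the $\limsup$ argument; the distinction between $w^k$ and $z^k$ washes out asymptotically).

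Combining, I get the pointwise estimate $\|\delta y^k\| \le \ell_{\text g}\|\delta x^k\| + \ell_{\text g}\ell_{\text x}\|\delta u^k\| \le \ell_{\text g}(1+\ell_{\text x})\,\bigl\|[\delta x^k;\,\delta u^k]\bigr\|$, where I bound each of $\|\delta x^k\|,\|\delta u^k\|$ by the norm of the stacked vector. Then I would apply Theorem~\ref{th:ATE}: taking $\limsup_{k\to\infty}$ on both sides, the transient term $\eta_1 (c_{\text M})^k\|[\delta x^0;\delta u^0]\|$ vanishes since $c_{\text M}\in[0,1)$, leaving
\begin{equation*}
\limsup_{k\to\infty}\bigl\|[\delta x^k;\delta u^k]\bigr\| \le \gamma\Bigl(\limsup_{k\to\infty}\sup_{0\le s\le k-1}\|z^s\|\Bigr) = \gamma\Bigl(\limsup_{k\to\infty}\|z^k\|\Bigr),
\end{equation*}
the last equality because $\limsup$ of the running supremum equals the $\limsup$ of the sequence (tail behavior). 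Continuity and monotonicity of the $\mathcal K$-function $\gamma$ let me pass the $\limsup$ inside, and multiplying by $\ell_{\text g}(1+\ell_{\text x})$ yields $\gamma_{\text a} = \ell_{\text g}(1+\ell_{\text x})\,\gamma$, which is again a $\mathcal K$-function.

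The main obstacle is a bookkeeping subtlety rather than a deep difficulty: Theorem~\ref{th:ATE} measures $\delta u^k$ relative to $u^*(z^k)$, whereas Corollary~\ref{cor:IOg} needs the error relative to $u^*(w^k)$. I would handle this by using Assumption~\ref{ass:existence}(ii) (Lipschitzness of $u^*$ in $w$) together with the boundedness of $\dot w$ to argue $\|u^*(w^k) - u^*(z^k)\|$ is controlled by a term of the same order as already appears in $\gamma$ — indeed the $(\ell_{u^*}\tau)\zeta$ entry in \eqref{eq:gamma} is precisely engineered to absorb the $O(\tau)$ discrepancy between $w^k$ and the disturbance-rate quantity $z^k$ over one sampling interval. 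Care is also needed to confirm that the composed map $u\mapsto h(u,w)$ is genuinely globally $\ell_{\text g}\ell_{\text x}$-Lipschitz on the relevant domain; this follows from Assumption~\ref{ass:sys}(i) and the Lipschitz constant $\ell_{\text x}$ of $x_{\text{ss}}$ introduced in Lemma~\ref{th:W}. Once these are in place, the corollary follows directly.
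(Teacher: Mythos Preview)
Your approach is the same as the paper's: insert the intermediate steady-state point, use Lipschitz continuity of $g$ and of $x_{\text{ss}}$ to reach $\|\delta y^k\|\le \ell_{\text g}(1+\ell_{\text x})\bigl\|[\delta x^k;\delta u^k]\bigr\|$, then feed in the ISS bound from Theorem~\ref{th:ATE} and pass to the limit. Two remarks are worth making.

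First, the discrepancy you flag between $u^*(z^k)$ and $u^*(w^k)$ is a typo in the theorem statement: the proof of Theorem~\ref{th:ATE} actually tracks $\|u^k-u^*(w^k)\|_P$ throughout (note that $z^k$ is a nonnegative scalar, so $u^*(z^k)$ is not even well-typed). No auxiliary argument via Assumption~\ref{ass:existence}(ii) is needed here.

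Second, your stated justification ``$\limsup$ of the running supremum equals the $\limsup$ of the sequence'' is false: $\sup_{0\le s\le k-1}\|z^s\|$ is non-decreasing in $k$, so its $\limsup$ equals $\sup_{s\ge 0}\|z^s\|$, which can strictly exceed $\limsup_k\|z^k\|$. The paper glosses over this step with ``taking the limit''; the standard fix is to exploit time-invariance of the ISS estimate: restart \eqref{eq:thISS} from an arbitrary initial index $k_0$, send $k\to\infty$ to obtain $\limsup_k\bigl\|[\delta x^k;\delta u^k]\bigr\|\le \gamma\bigl(\sup_{s\ge k_0}\|z^s\|\bigr)$, and then let $k_0\to\infty$ using continuity of $\gamma$ together with $\inf_{k_0}\sup_{s\ge k_0}\|z^s\|=\limsup_s\|z^s\|$.
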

\begin{proof}
See Appendix \ref{proof:CorIOg}
\end{proof}

It follows immediately from  \cite[Theorem 5]{nevsic1999formulas} that ISS of the discrete-time system \eqref{eq:discrete-time-system} implies ISS of the original sampled-data system \eqref{eq:sampled-data-system}.

\begin{theorem} \label{thm:sampled_data_stability}
Let Assumptions~\ref{ass:sys}-\ref{ass:T_lipschitz} hold, and assume that $\tau$ and $ \varepsilon$ satisfy the conditions in \eqref{eq:small_gain}. Then, \eqref{eq:sampled-data-system} is input-to-state stable \cite[Definition 2.1]{sontag1995characterizations} with respect to $\dot{w}$ with the asymptotic gain between $y$ and $\dot{w}$ given by \eqref{eq:AIOG}.
{\hfill $\square$}
\end{theorem}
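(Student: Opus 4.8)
The plan is to invoke the sampled-data ISS result of Ne\v{s}i\'{c} et al.\ \cite{nevsic1999formulas} to transfer the discrete-time ISS property of \eqref{eq:discrete-time-system}, established in Theorem~\ref{th:ATE}, back to the continuous-time sampled-data system \eqref{eq:sampled-data-system}. First I would observe that, under Assumptions~\ref{ass:sys}--\ref{ass:T_lipschitz} and the parameter selection \eqref{eq:small_gain}, Theorem~\ref{th:ATE} already gives that the discrete-time trajectory $(\delta x^k,\delta u^k)$ satisfies the ISS estimate \eqref{eq:thISS} with input $z^k$. The sampled-data state of \eqref{eq:sampled-data-system} consists of the plant state $x(t)$ together with the held control $u(t)=u^k$; at the sampling instants $t^k$ this coincides with the discrete-time state, so \eqref{eq:thISS} controls the trajectory on the sampling grid. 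It therefore remains to control the inter-sample behaviour.

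The key steps, in order, are: (i) restate the discrete-time ISS bound from Theorem~\ref{th:ATE} in terms of the sampled-data variables evaluated at $t^k$; (ii) bound the inter-sample growth of $\|x(t)-x_{\text{ss}}(u(t),w(t))\|$ for $t\in[t^k,t^{k+1})$ using Assumption~\ref{ass:ctime_lyapunov} — since $u$ is constant on this interval, the comparison-lemma argument underlying Lemma~\ref{th:W} shows $W(t)$ can grow by at most a bounded factor times $W^k$ plus a $\mc{K}$-function of the disturbance derivative over the interval, while $u(t)\equiv u^k$ means $\delta u$ does not change; (iii) combine (i) and (ii) to produce a $\KL$-function bound plus a $\mc{K}$-function-of-$\dot w$ bound valid for all $t\geq 0$, which is precisely the continuous-time ISS definition of \cite{sontag1995characterizations}; this is exactly the content of \cite[Theorem~5]{nevsic1999formulas}, which I would cite to package steps (ii)--(iii). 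Finally, (iv) the asymptotic-gain claim follows by applying Corollary~\ref{cor:IOg}: the output map $y=g(x,w)$ is $\ell_{\text g}$-Lipschitz in $x$ (Assumption~\ref{ass:sys}(i)) and $x_{\text{ss}}$ is $\ell_{\text x}$-Lipschitz in $u$, so the asymptotic bound on $\|\delta y^k\|$ propagates to $\limsup_{t\to\infty}\|y(t)-y^*(w(t))\|$ with the same gain $\gamma_{\text a}$, because the inter-sample terms vanish asymptotically as $\dot w$ enters only through $z^k\to 0$ contributions that are already absorbed into $\gamma$.

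The main obstacle I expect is the inter-sample analysis in step (ii): one must verify that holding $u$ constant over $[t^k,t^{k+1})$ does not cause the Lyapunov function $W$ to blow up, i.e., that the continuous-time decay in Assumption~\ref{ass:ctime_lyapunov}(ii) dominates for the chosen $\tau$. This is essentially already done inside the proof of Lemma~\ref{th:W} (the factor $c_{\text W}$ and the $\sqrt{\tau}\sigma(z^k)$ term arise exactly from integrating $\dot V\leq -\mu V+\sigma_{\text c}(\|\dot w\|)$ over one sampling interval), so the work reduces to a clean restatement rather than new estimates. The only subtlety is matching the time-scaling and norm conventions between the discrete estimate \eqref{eq:thISS} and the continuous-time ISS definition, and checking that the $\mc{K}$-function $\gamma_{\text a}$ inherited from Corollary~\ref{cor:IOg} is the correct asymptotic gain — for which the standard argument is that ISS implies the asymptotic-gain property with the same $\mc{K}$-function up to the equivalences in \cite{sontag1995characterizations}. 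Given that \cite[Theorem~5]{nevsic1999formulas} is stated as doing precisely this transfer, the proof is essentially a one-line invocation plus the observation that Corollary~\ref{cor:IOg} already supplies the output gain.
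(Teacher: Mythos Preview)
Your proposal is correct and matches the paper's approach exactly: the paper states, immediately before Theorem~\ref{thm:sampled_data_stability}, that ``it follows immediately from \cite[Theorem~5]{nevsic1999formulas} that ISS of the discrete-time system \eqref{eq:discrete-time-system} implies ISS of the original sampled-data system \eqref{eq:sampled-data-system},'' and leaves the theorem without a separate proof section. Your steps (i)--(iv) simply unpack this one-line invocation plus the appeal to Corollary~\ref{cor:IOg} for the asymptotic output gain, which is precisely what the paper intends.
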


\begin{remark}
Theorem~\ref{thm:sampled_data_stability} holds under \eqref{eq:small_gain} which is a small-gain type condition requiring the physical system to be sufficiently fast or the algorithm to be sufficiently slow. It is always possible to make the interconnection ISS by letting $\varepsilon\to 0$, $\tau \to \infty$, or by a combination of the two. The condition \eqref{eq:small_gain} is likely conservative due to the generality of the setting we consider. Tailoring the analysis to specific classes of systems (e.g., linear dynamics) and/or algorithms will yield sharper bounds and is a topic for future work.
\end{remark}

\section{Numerical Examples}
To illustrate the utility of our framework, we present two numerical examples involving  smart buildings and robotics swarms.
Code (and parameters) for the examples can be accessed at \href{https://gitlab.nccr-automation.ch/mbadyn/fes-cdc-examples}{this gitlab link}\footnote{https://gitlab.nccr-automation.ch/mbadyn/fes-cdc-examples\label{gitlab}}~\cite{oursoftware}.

\subsection{FES of Smart Buildings} \label{ss:numerical_example1}

\begin{figure}[t]
  \centering
  \includegraphics[width=\columnwidth]{./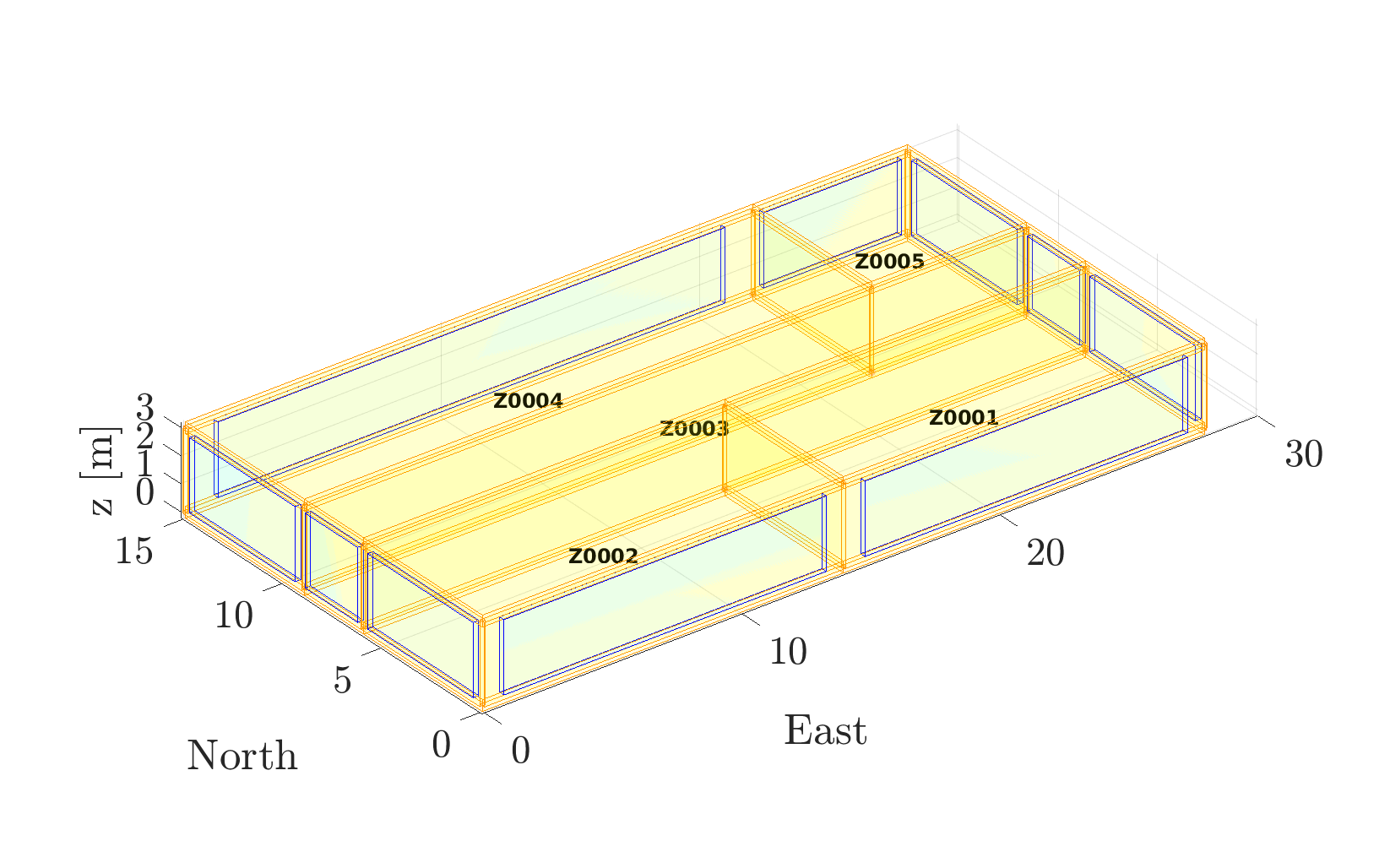}
  \caption{Example building generated via the BRCM toolbox~\protect\cite{sturzenegger2014brcm}.}
  \label{fig:building}
\end{figure}
\begin{figure}[t]
  \centering
    \includegraphics[width=\columnwidth]{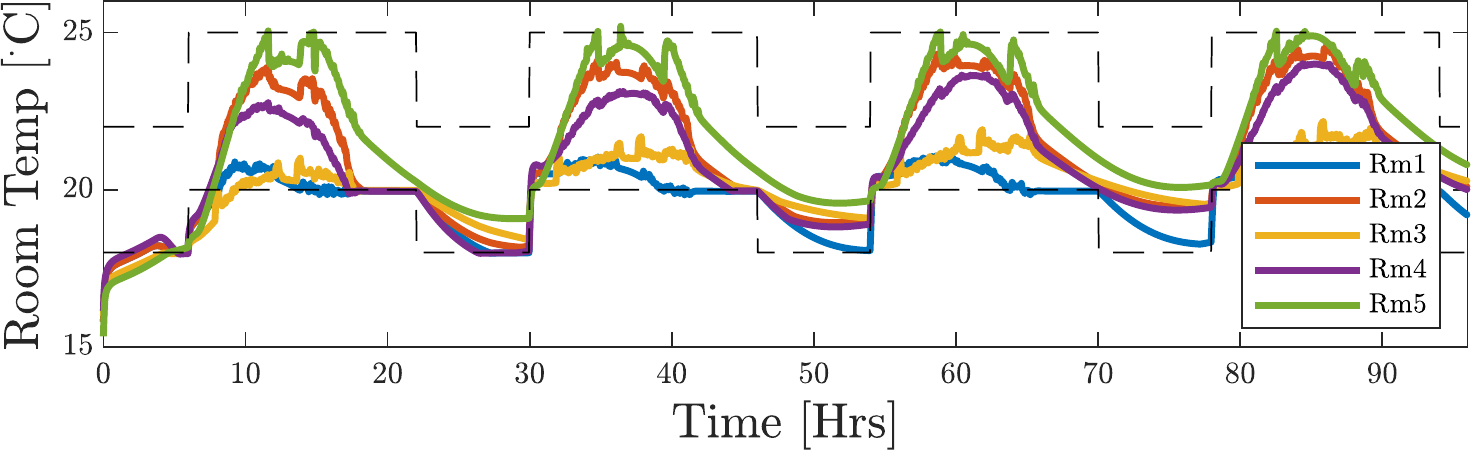}
    \includegraphics[width=\columnwidth]{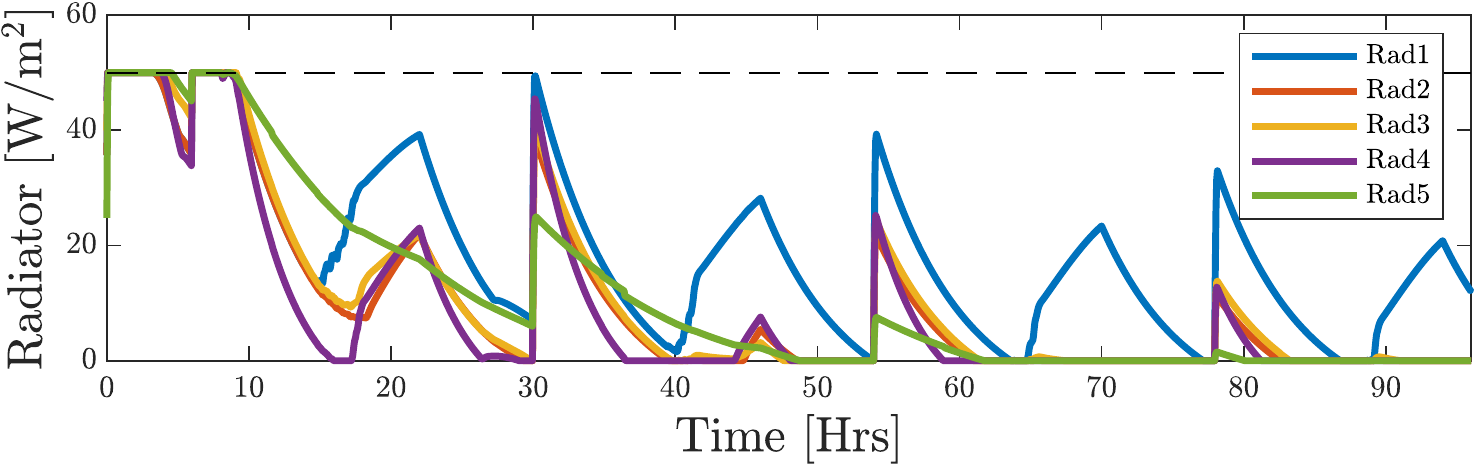}
    \includegraphics[width=\columnwidth]{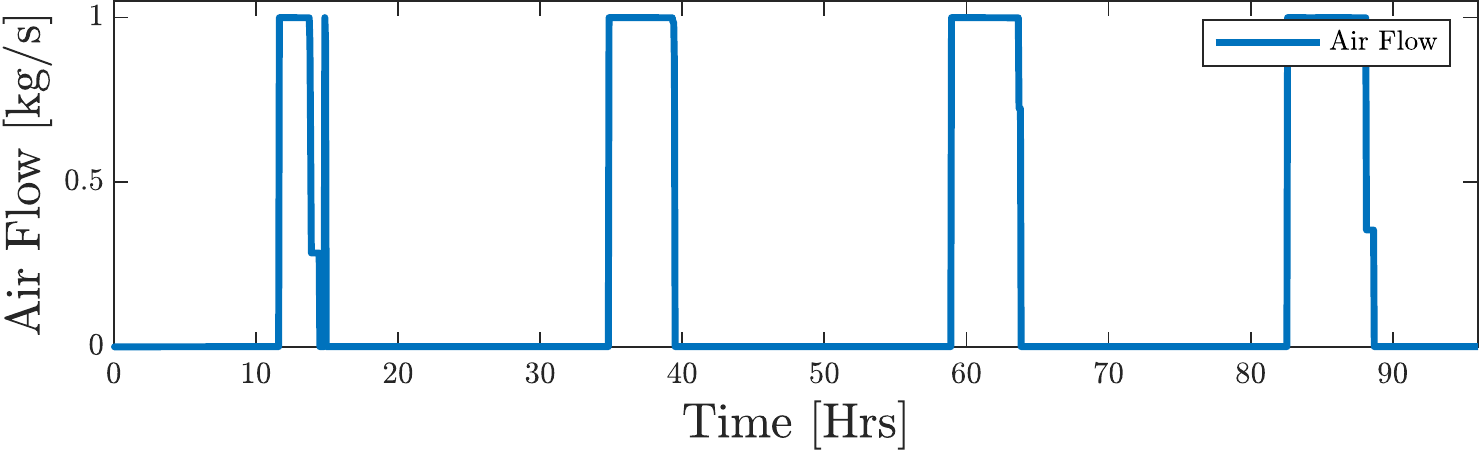}
  \vspace{0.25cm}
  \caption{Simulation of the feedback optimization algorithm \protect\eqref{eq:PGM} on the building dynamics~\eqref{eq:1}.}
  \label{fig:feedback_opt_results}
\end{figure}



In this section, we illustrate how FES can be applied to smart building automation. Consider the 5-room single-story office building in Fig.~\ref{fig:building}. Its dynamics\footnote{The model is generated using the BRCM toolbox~\cite{sturzenegger2014brcm}. 

} can be expressed as,
\begin{equation}\label{eq:1}
    \dot{x} = Ax + B_u u + B_w w + \sum_{i=1}^{n_u} \left(B_{wu,i}w + B_{xu,i}x \right) u_{i}.
\end{equation}
The state $x\in \mc{L}^{113}$ contains the temperatures of the rooms, as well as all other building elements considered by the model, such as wall layers, floor layers, etc. The control inputs $u\in \mc{L}^{8}$ are an air handling unit (AHU) and one radiator in each room. The radiators emit a heat gain between 0 and 50 W/m$^2$, and the AHU inputs are the air flow (0-1kg/s), heating power (0-1000W), and cooling power (0-100W).
We denote the set of these actuation constraints as $\mathcal{U}$.

The disturbances $w\in \mc{L}^{10}$ include the solar radiation, the ambient outdoor air temperature, the temperature of the ground, and the internal heat gains coming from building occupants. The measurement $y\in \mc{L}^{7}$ contains the temperatures of each room as well as the outside air and ground temperatures. Solar radiation and the heat emitted by the buildings occupants are unmeasured, the former since accurately modeling the physics of windows to yield such a measurement is expensive, and the latter due to privacy.

We model 15 building occupants by Markov chains with a time-dependent probability of being in a given room~\cite{wang2011novel}, and consider an 8 hour work day with a 1.5 hour lunch break. The solar radiation and ambient temperature are modelled via periodic functions yielding temperatures and solar gains representative of central European springtime.

The objective is to minimize energy usage while maintaining the room temperatures within a comfortable range $\mc{T} = [T_{\min}~T_{\max}]$. This is encoded in the following cost
\begin{align} 
  \varphi_1(u,y) &= \frac{1}{2} (Hu + c)^Tu 
  + \frac{\eta}{2} \sum_{i=1}^{5} {\mathrm{dist}(y_i,\mc{T})}^2 ,\label{eq:2}
\end{align}
where $H \succ 0$ and $c$ model the electricity price, $\eta>0$ and $\mathrm{dist}(y,\mc{T}) = \min_{u\in \mc{T}} \|y - u\|$. The actuation limits are modelled by the additional nonsmooth term $\varphi_2(u) = \iota_\mc{U}(u)$. 

Based on these costs and the steady-state map associated with \eqref{eq:1}, we construct a FO controller using the algorithm described in Sections~\ref{ex:opt_zeros} and \ref{ex:gradient_descent}.
Finally, we set\footnote{A script is available in the \texttt{gitlab} repository\footref{gitlab} to show that wrong choices of the parameters may yield to instability of the interconnection.} $\epsilon=1$ and $\tau=0.05$.
Simulation results are presented in Figure~\ref{fig:feedback_opt_results}.
The proposed controls are able to keep the rooms between the temperature bounds, with only minor constraint violations.
We compare the controller performance to a hysteresis-based thermostat controller, which turns the radiators and AHU heater on when $T_{\text{room}} \leq \frac{T_{\min} + T_{\max}}{2} - 2$ and off when $T_{\text{room}} \geq \frac{T_{\min} + T_{\max}}{2}$. The AHU cooler is turned on when $T_{\text{room}} \geq \frac{T_{\min} + T_{\max}}{2}+2$, and off when $T_{\text{room}} \leq \frac{T_{\min} + T_{\max}}{2}$, with all temperatures in $^\circ$C.
In our example, the FO method provides a 55.15\% reduction in constraint violations, and a 17.99\% reduction in total cost as measured by~\eqref{eq:2}.

\subsection{FES of Robotic Swarms with Connectivity Objectives}  \label{ss:numerical_example2}
In this example, modified from \cite{krilavsevic2021learning}, we consider a collection of $N = 4$ robotic agents, each tasked with investigating a signal at $\bar r_i^T = [\bar a_i~~\bar b_i]$. Each agent has the unicycle dynamics
\begin{align}
\dot{a}_i = v_i \cos(\theta_i),~~\dot{b}_i = v_i \sin(\theta_i),~~\dot{\theta}_i = p_i,
\end{align}
where $a_i$ and $b_i$ are the coordinates in the Cartesian plane, $\theta$ is the heading angle, and $v_i$, $p_i$ are control inputs, and measures the relative position of the signal and of the other agents, i.e., $y_i = [r_i \!-\! \bar r_i,~[r_i \!-\! r_j]_{j\in \mc{I}\setminus\{i\}}]$ where $r_i^T = [a_i ~~ b_i]$. Each agent implements the (local) control policy \cite{lee2000stable}
\begin{subequations}
\begin{gather}
    v_i = k_1 \|r_i-u_i\| \cos(\phi_i),\\
    p_i = -k_1 \cos(\phi_i)\sin(\phi_i) - k_2 \phi_i,
\end{gather}
\end{subequations}
where $u_i\in \reals^2$ is the position command (i.e., the control input), the heading error is $\phi_i = \pi + \theta_i - \arctan(\frac{b_i-\bar b_i}{a_i-\bar a_i})$, and $k_1 = 1$ and $k_2 = 0.5$ are gains. This control policy stabilizes each agent's dynamics and results in the steady-state maps $h_i(u_i) = u_i$ and the system wide map $h = [h_i]_{i\in \mc{I}} = I$.

The goal of each agent is to approach and inspect its assigned signal, however it is also desirable to maintain a certain degree of connectivity in case of a critical failure. These competing objectives are encoded in the cost function
\begin{equation}
    J_i(u_i,y) = \|u_i - \bar r_i\|^2 + 0.25 \sum_{j \in \mc{I} \setminus \{i\}} \|r_i - r_j\|^2,
\end{equation}
resulting in a strongly monotone game of the same form as in Section~\ref{ex:game_zeros}
\begin{equation} \label{eq:Game_robot}
    \forall i \in \mc{I}~:~\min_{u_i\in \mc{U}_i}~\left\{J_i(u_i,y)~\big|~y = h(u)\right\},
\end{equation}
where $\mc{U}_i = [-5~10] \times [-6~6]$ is the safe traversing area.

A numerical simulation of FES applied to the game described above, and implemented using the best-response algorithm in Section~\ref{ex:best_response} with parameters $\tau = 0.5$, $\varepsilon = 1$, is illustrated in Figures~\ref{fig:box_coord} and \ref{fig:line_coord}. As predicted by Theorem~\ref{thm:sampled_data_stability}, the coupled system is stable and the agents converge to the unique NE of \eqref{eq:Game_robot}.
\begin{figure}
    \centering
    \includegraphics[width=\columnwidth]{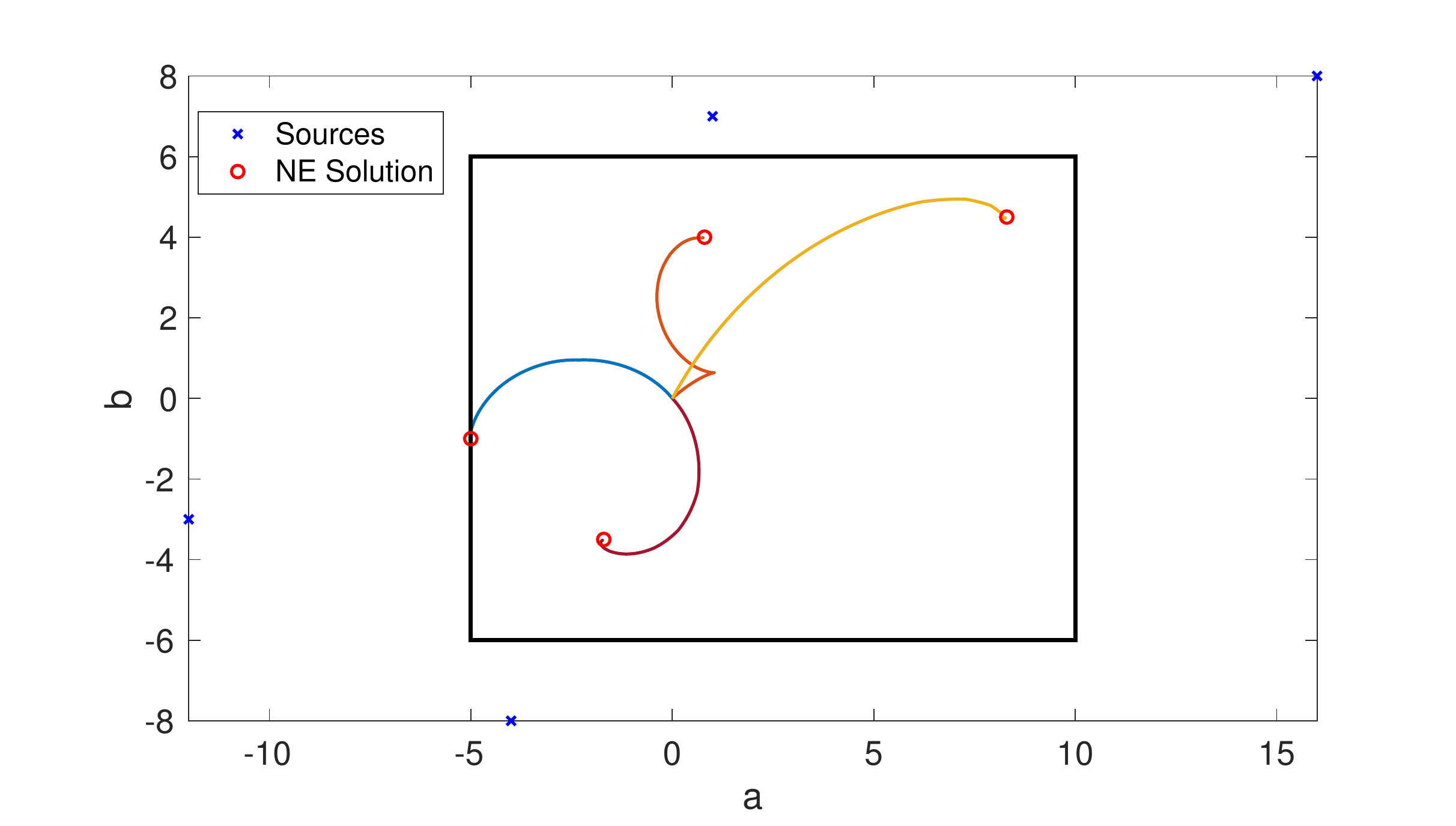}
    \vspace{1mm}
    \caption{Trajectories in the $a$-$b$ plane. The robots reach the NE which differs from the source locations due to the connectivity objective.}
    \label{fig:box_coord}
\end{figure}%
\begin{figure}[htbp]
    \centering
    \includegraphics[width=\columnwidth]{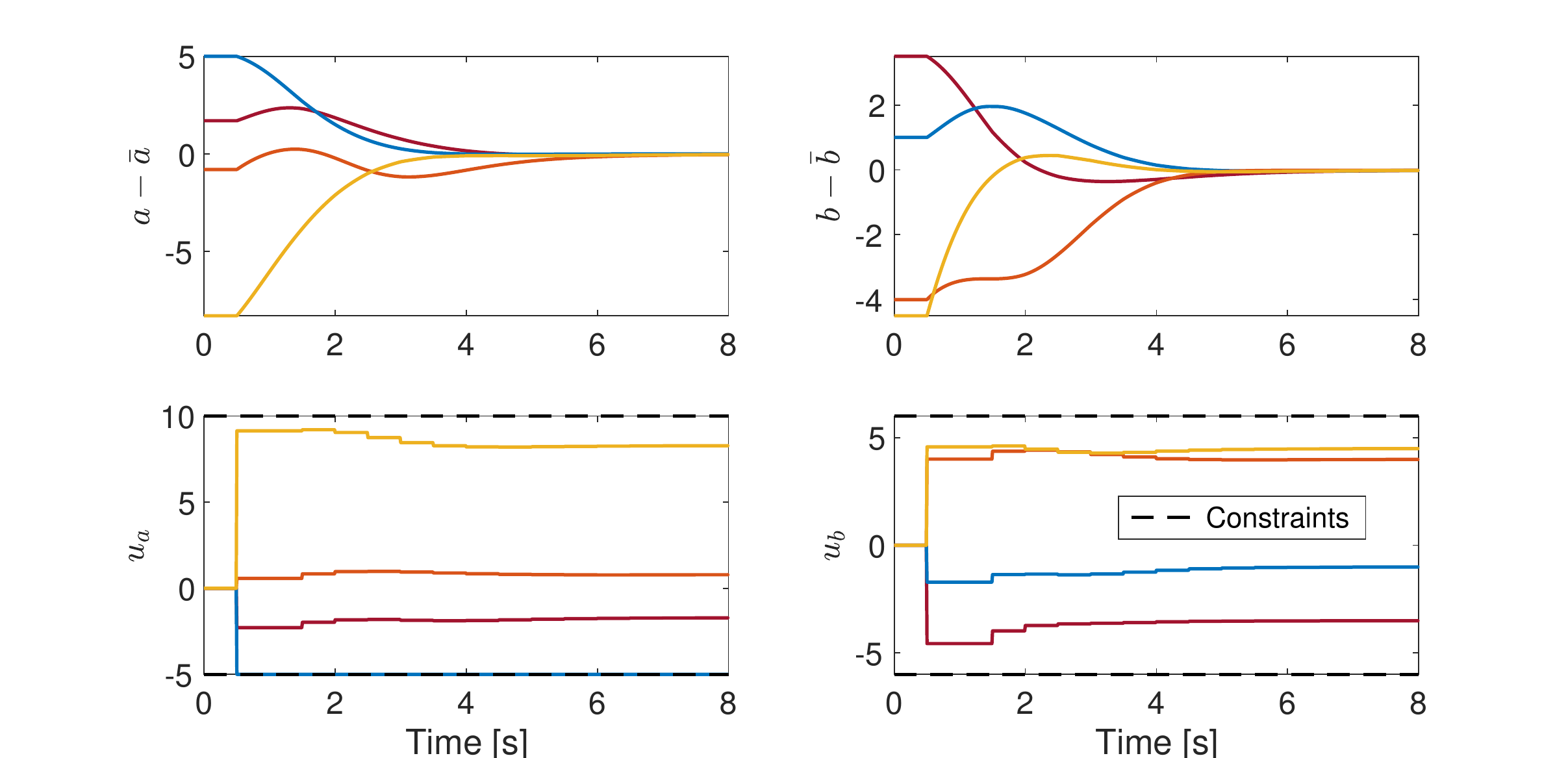}
    \vspace{1mm}
    \caption{State error (top row) and input (bottom row) trajectories for the robotic agents. There is no obvious timescale separation between the algorithm and the physical dynamics and the NE is a stable equilibrium point of the closed-loop system.}
    \label{fig:line_coord}
\end{figure}

\section{Conclusion}
This paper introduced feedback equilibrium seeking (FES), a methodology for guiding dynamical systems to economic equilibria. We provided illustrative examples in robotic coordination and building control and derived conditions for input-to-state stability of the sampled-data closed-loop system. Since feedback optimization (FO) is a special case of FES, this paper also provides the first closed-loop stability analysis of FO in a sampled-data setting. Future research directions include expanding the class of algorithms, handling local stability/convergence, incorporating model learning, real-time constraint satisfaction and tackling incentive design problems using Stackelberg games.

\appendix
%
%
\subsection{Proof of Lemma~\ref{th:W}} \label{proof:W}
By virtue of Assumption~\ref{ass:ctime_lyapunov}, we have that
\begin{equation} \label{eq:diff_ineq}
    \dot V(t) \leq -\mu V(t) + \sigma_{\text c}(\|\dot w(t)\|),
\end{equation}
for any fixed $u \in \mc{U}$. Given the initial condition $V(t_0) = V_0$, it can be easily verified that \eqref{eq:diff_ineq} implies that
\begin{align}
    V(x(t),u,w(t)) &\leq V_0 e^{-\mu(t-t_0)} + \int_{t_0}^t \sigma_{\text c}(\|\dot w(s)\|)~ds\\
    &\leq V_0 e^{-\mu(t-t_0)} + (t-t_0) \sigma_{\text c}(z^0)
\end{align}
where $z^0 = z(t^0) = \sup_{s\in [t^0,t]} \|\dot w(s)\|$. Letting $t = t^{k+1}$ and $t^0 = t^k$, then we obtain that
\begin{equation} \label{eq:Wproof1}
    V(x^{k+1},u,w^{k+1}) \leq e^{-\mu \tau} V(x^k,u,w^k) + \tau \sigma_{\text c}(z^k).
\end{equation}
Now, let $u = u^{k+1}$, then
\begin{align*}
V^{k+1} & \leq e^{-\mu \tau} V(x^k,u^{k+1},w^k) + \tau \sigma_c(z^k)\\
& \leq e^{-\mu\tau} \alpha_2 \| x^k - x_{ss}(u^{k+1},w^k)\|^2 + \tau \sigma_c(z^k)\\
& = e^{-\mu\tau} \alpha_2 (\| x^k - x_{ss}(u^{k},w^k)\| \\ &+ \|x_{ss}(u^{k+1},w^k) - x_{ss}(u^{k},w^k)\|)^2 + \tau \sigma_c(z^k),
\end{align*}
where $\alpha_2$ is from the upper bound on the Lyapunov function, see Assumption \eqref{ass:ctime_lyapunov}~(i). Taking the square root of both sides and using the lower bound on the Lyapunov function, $W = \sqrt V$, and the $\ell_{\text x}$-Lipschitz continuity of $x_{ss}$ w.r.t. $u$, we obtain
\begin{align*} 
W^{k+1} & 
\textstyle
\leq e^{-\frac{\mu\tau}{2}}  \sqrt{(\alpha_2 / \alpha_1)}\, W^k + e^{-\frac{\mu\tau}{2}} \sqrt{\alpha_2} \\&  \|x_{ss}(u^{k+1},w^k) - x_{ss}(u^{k},w^k)\| + \sqrt{\tau}\sigma(z^k)\\
& \leq c_{\text W} W^k + e^{-\frac{\mu\tau}{2}} \sqrt{\alpha_2} \ell_{\text x}\|u^{k+1} - u^k\| + \sqrt{\tau}\sigma(z^k)\\
& = c_{\text W} W^k + c_{\text W} \ell_{\text W} \|u^{k+1} - u^k\|  +  \sqrt{\tau} \sigma(z^k),
\end{align*}
where $\ell_x>0$ is the Lipschitz constant of $x_{ss}$ w.r.t. $u$,  which exists since $x_{ss}$ is continuously differentiable and $\mc U $ compact, $c_{\text W}^2 = \frac{\alpha_2}{\alpha_1} e^{-\mu \tau}$, $\ell_{\text W} = \sqrt{\alpha_1} \ell_{\text x}$ and $\sigma = \sqrt{\sigma_{\text c}}\in \K$.
{\hfill $\blacksquare$}

\subsection{Proof of Lemma~\ref{lem:ParamSel}} \label{proof:ParamSel}
For all $\varepsilon \in (0,1]$, $M$ in \eqref{eq:M} is a positive matrix. Hence, by the Perron--Frobenius theorem, $\rho(M)=\eigmax{M}  \in \R_{+}$. The connection between $\eigmax{M} \in (0,1)$ and the bounds in \eqref{eq:small_gain} follows by a direct computation of $\eigmax{M}$.
{\hfill $\blacksquare$}

\subsection{Proof of Theorem \ref{th:ATE}}
\label{proof:ATE}
Define the mappings $T_{\varepsilon}(u,y):= (1\!-\!\varepsilon)u + \varepsilon T(u,y)$ and $\Tilde T_{\varepsilon}(u,w) := T_{\varepsilon}(u,h(u,w))$. Under Assumption \ref{ass:optRoutine}~(ii), it follows that $\Tilde T_{\varepsilon}(\cdot,w)$ is $\phi(\varepsilon)$-linearly convergent to $u^*(w)$, with $\phi(\varepsilon):=1-\varepsilon(1-c_{\text T}) \in (0,1)$, whenever $\varepsilon \in (0,1]$. Now, consider the weighted input tracking error $\|\delta u^{k+1}\|_P=\| u^{k+1} - u^*(w^{k+1}) \|_P  $; the following inequalities hold:
{\small
\begin{align} \nonumber 
    &\|\delta u^{k+1}\|_P = {\| T_{\varepsilon}( u^k,y^{k+1}) - u^*(w^{k+1})\|}_P \\
    \nonumber
    & \overset{\text{(s.1)}}{\leq} {\|\Tilde T_{\varepsilon}(u^k, w^{k+1}) - u^*(w^{k+1})  \|}_P
        \\
        \nonumber
    & \qquad   + {\| T_{\varepsilon}( u^k,y^{k+1}) -  T_{\varepsilon}(u^k,h(u^k,w^{k+1})) \|}_P \\
    \nonumber
    & \overset{\text{(s.2)}}{\leq} {\phi(\varepsilon)\| u^k - u^*(w^{k+1}) \|}_P + \varepsilon \ell_{\text T} {\| y^{k+1} - h(u^k,w^{k+1}) \|}_P\\
    \nonumber
    & \overset{\text{(s.3)}}{\leq} \phi(\varepsilon) {\| u^k - u^*(w^k) \|}_P 
    +   \phi(\varepsilon)  \ell_{u^*} \|P\| \|w^{k+1}-w^k \|\\
    \nonumber & \qquad     
         + \varepsilon \ell_{\text T} \|P\| \ell_{g} \| x^{k+1}  - x_{\text{ss}}(u^k,w^{k+1}) \| \\
   &\overset{\text{(s.4)}}{\leq} \nonumber
     \phi(\varepsilon) {\| u^k - u^*(w^k) \|}_P +  \phi(\varepsilon) \ell_{u^*} \|P\|  \tau \, z^k  \\
   & \qquad  \textstyle  \nonumber 
         + \varepsilon \frac{\ell_{\text T} \|P\|  \ell_{g}  }{\sqrt{\alpha_1}}\, W(x^{k+1},u^k,w^{k+1})\\
   &\overset{\text{(s.5)}}{\leq} \nonumber
     \phi(\varepsilon) \big( \| u^k - u^*(w^k) \|_P + \|P\|  \ell_{u^*}  \tau  z^k \big), \\
   & \qquad  \textstyle + \varepsilon \frac{\ell_{\text T} \|P\|  \ell_{g}}{\sqrt{\alpha_1}}\, \left( c_{\text W} W(x^{k},u^k,w^{k}) + \sqrt{\tau}\sigma(z^k) \right)
    \label{eq:dis-u}
\end{align}
}
Here, (s.1) follows by adding and subtracting $\Tilde T_{\varepsilon}(u^k, w^{k+1})$, or, equivalently, $T_{\varepsilon}(u^k,h(u,w))$, and the triangular inequality; (s.2) by the $\phi(\varepsilon)$-linear convergence of $\Tilde T_{\varepsilon}$ and the Lipschitz continuity of $T$ (Assumptions \ref{ass:optRoutine}); the first line in (s.3) by adding and subtracting $u^*(w^k)$, the triangular inequality, and the Lipschitz continuity of the solution mapping $u^*$ (Assumption \ref{ass:existence}), while the second by the $\ell_g$-Lipschitz continuity of $g$ (Assumption \ref{ass:sys}); (s.4) since $\|w^{k+1}-w^k \| \leq \tau z^k$, while the second line is due to Assumption \ref{ass:ctime_lyapunov}~(i) and since $W = \sqrt{V}$; finally, (s.5) follows by applying the square root of \eqref{eq:Wproof1} on $W(x^{k+1},u^k,w^{k+1})$.

By Lemma \ref{th:W}, $W^k=W(x^{k},u^k,w^{k})$ satisfies
\begin{align} \label{eq:Wproof2}
  & W^{k+1} \leq c_{\text W} W^k + c_{\text W} \ell_{\text W} \|u^{k+1} - u^k\| + \sqrt{\tau} \sigma(z^{k}).
\end{align}
Next, we derive a bound for $\|u^{k+1} - u^k\|$ in \eqref{eq:Wproof2}, i.e.,
{\small
\begin{align}
\nonumber
  &  \|u^{k+1} - u^k\| = \|T_{\varepsilon}(u^k,y^{k+1})-u^k \| = \varepsilon\|T(u^k,y^{k+1})-u^k  \| \\
\nonumber
        & \overset{\text{(s.1)}}{\leq} \varepsilon\, {\|  T(u^k,h(u^k,w^{k+1}))-u^k\|}\\
\nonumber
        &\qquad +  \varepsilon  \,  \| T(u^k,y^{k+1})- T(u^k,h(u^k,w^{k+1}))\|\\
        \nonumber
       &\overset{\text{(s.2)}}{\leq} \varepsilon  \big({\| \tilde T(u^k,w^{k+1})\!-\!u^*(w^{k+1})\|} + {\|u^k\!-\!u^*(w^k)\|}  \\
        \nonumber
        &\qquad  
        + \|u^*(w^{k+1})\!-\!u^*(w^k) \|\big) +\varepsilon  \ell_{\text T}\,   \| y^{k+1}- h(u^k,w^{k+1})\|\\
\nonumber
       &\overset{\text{(s.3)}}{\leq} \varepsilon \| P^{-1}\| (1+c_{\text T}) \, ({\|  u^k-u^*(w^k)\|}_P + \|P\| \ell_{u^*} \tau   z^k ) \\
        \nonumber
        &\qquad  +\varepsilon  \ell_{\text T}\,   \| y^{k+1}- h(u^k,w^{k+1})\| \\
\nonumber
        &\overset{\text{(s.4)}}{\leq} \varepsilon \| P^{-1}\| (1+c_{\text T}) \, ({\|  u^k-u^*(w^k)\|}_P + \|P\| \ell_{u^*} \tau   z^k  ) \\
        \label{eq:deltaUproof2}
        &\qquad \textstyle +  \frac{\ell_{\text T}\, \ell_{\text g}  }{\sqrt{\alpha_1}} \varepsilon \,  \left( c_{\text W} W(x^{k},u^k,w^{k}) + \sqrt{\tau}\sigma(z^k) \right).
\end{align}}%
Here, (s.1) follows by adding and subtracting $\tilde T(u^k,w^{k+1})$, and the triangular inequality; the first term in (s.2) by adding and subtracting $u^*(w^{k+1})$ and $u^*(w^k)$ and the triangular inequality, while the second  by the Lipschitz continuity of $ T$ (Assumption \ref{ass:T_lipschitz}); (s.3) by the facts: $\lambda_{\min}(P) \|a \| \leq \|a \|_P$, $\forall a \in \R^n$, and $ \lambda_{\min}(P) = (\|P^{-1}\|)^{-1}$, the linear convergence of $\tilde T$ (Assumption \ref{ass:optRoutine}~(ii)) and the Lipschitz continuity of $u^*$ (Assumptions \ref{ass:existence}); (s.4) by the Lipschitz continuity of $g$ and the same steps in the last inequality of \eqref{eq:dis-u}.

By substituting \eqref{eq:deltaUproof2} into \eqref{eq:Wproof2}, we obtain
\begin{align}     \nonumber
     W^{k+1} & \textstyle \leq  \left(1 +  \frac{ \ell_{\text W} \ell_{\text T} \ell_{\text g}}{\sqrt{\alpha_1}} \varepsilon c_{\text W} \right) \left( c_{\text W} W^k + \sqrt{\tau}\sigma(z^{k}) \right) \\
        \nonumber
        &\quad 
         +   c_{\text W} \ell_{\text W} \varepsilon \| P^{-1}\| (1+c_{\text T}) \big( \|  u^k-u^*(w^k)\|_P  \\
          & \quad
          + \|P\| \ell_{u^*} \tau \,   z^k \big).
    \label{eq:dis-e}
\end{align}
Now, let us define the matrix $M$ as in \eqref{eq:M}, and the vectors
\begin{align}
\varpi^k := \left[
    \begin{smallmatrix}
    {\|\delta u^k\|}_P\\
     W^k 
     \end{smallmatrix}
    \right],
\quad
     q(z^k) :=  
    \left[
    \begin{smallmatrix}
    \|P\| \ell_{u^*} \tau \, z^k  \\
(c_{\text W})^{-1} \sqrt{\tau}\sigma(z^k)
    \end{smallmatrix}
    \right]. 
\end{align}
The inequalities \eqref{eq:dis-u} and \eqref{eq:dis-e} can be compactly recast as
\begin{align}
\label{eq:McI}
\varpi^{k+1}
   &\leq 
M (
\varpi^k
 +
q(z^k)).
\end{align}
By telescoping \eqref{eq:McI}, we get 
\begin{align}\textstyle
\label{eq:McI_2}
\varpi^{k+1}
   \leq 
{(M)}^{k+1} \varpi^0 
 +
\sum_{s=0}^{k} {(M)}^{k+1-s}  q(z^s).
\end{align}
Recall that $M$ is a Schur matrix, i.e., $\rho(M ) < 1$, whenever $\tau$ and $\varepsilon$ satisfy \eqref{eq:small_gain}, by Lemma~\ref{lem:ParamSel}. For such a matrix, there exist constants $r \in \R_{+}$ and $c_{\text M} \in [0,1)$ s.t. $\|{(M)}^k \| \leq r  {(c_{\text M})}^k$, see e.g. \cite[Ex.~3.4]{jiang2001input}. Hence, by taking the norm of \eqref{eq:McI_2}, on both sides, and using the previous result, we get
\begin{align*} 
   \|\varpi^{k+1}\|
   \leq 
r \,{(c_{\text M})}^{k+1} \|\varpi^{0}\|
+ r\, \sum_{s=0}^{k} {(c_{\text M})}^{k+1-s}  \|q(z^s)\| .
\end{align*}
If we let $\bar z^k = \sup_{0 \leq s \leq k} \|z^s\|$, then we have that
\begin{align*}\textstyle
\| \varpi^{k+1}\|
 \leq r \,
{\left( c_{\text M} \right)}^{k+1} \|\varpi^{0}\|
+ r \, c_{\text M}\left( \sum_{s=0}^{k} {\left( c_{\text M} \right)}^s \right) \|q(\bar z^k)\|,
\end{align*}
where last term into parentheses is the first $s\!+\!1$ addends of a geometric series. Hence, it can be upper-bounded, yielding
\begin{align} 
\label{eq:ISS3}
 \textstyle
\|\varpi^{k+1}\|
\leq r \, {\left( c_{\text M} \right)}^{k+1} \|\varpi^{0}\|
+  r \, \frac{c_{\text M}}{1- c_{\text M}} \, \|q(\bar z^k)\|.
\end{align}
Now, by using $\lambda_{\min}(P){\|\cdot\|}_P \leq \|\cdot \| \leq \lambda_{\max}(P){\|\cdot\|}_P$ and the Lyapunov bounds in Assumption~\ref{ass:ctime_lyapunov}~(i), we can write
\begin{align}
\label{eq:boundsVarpi}
m_1 \left\| \left[ 
 \begin{matrix}
        \delta x^{k}\\
        \delta u^{k}
    \end{matrix} 
    \right]
    \right\| 
    \leq
    \|\varpi^k\|
    \leq
    m_2 \left\| \left[ 
 \begin{matrix}
        \delta x^{k}\\
        \delta u^{k}
    \end{matrix} 
    \right]
    \right\|,
\end{align}
$m_1\!=\! \min\{\lambda_{\min}(P),\sqrt{\alpha_1} \}$,  $m_2\!=\! \max\{\lambda_{\max}(P),\sqrt{\alpha_2} \}.$
Finally, by using the bounds in \eqref{eq:boundsVarpi} into \eqref{eq:ISS3}, we obtain the ISS inequality in \eqref{eq:thISS}, with $\eta_1 = r \frac{ m_2}{m_1} $ and $\eta_2 = \frac{r}{m_1}$.
{\hfill $\blacksquare$}

\subsection{Proof of Corollary \ref{cor:IOg}}
\label{proof:CorIOg}
The input-output tracking error $ \|\delta y^k \|=\|y^k - y^*(w^k)\|$, where $y^*(w^k)=h(u^*(w^k),w^k)$, satisfies the following:
\begin{align}
\nonumber
    {\|y^k - h(u^*(w^k),w^k)\|} 
     & \leq \ell_{g} {\|x^k-x_{\text{ss}}(u^*(w^k),w^k) \|} \\
   \nonumber 
                          & \leq \ell_{g}(\| \delta x^k \| +\ell_{\text x}\| \delta u^k \|)\\
                          & \leq
                            \ell_{g}(1 +\ell_{\text x}) \left\| \left[ 
 \begin{smallmatrix}
        \delta x^{k}\\
        \delta u^{k}
    \end{smallmatrix} 
    \right]
    \right\| .
\label{eq:AGproof}
\end{align}
The first inequality follows by the $\ell_g$-Lipschitz continuity of $g$ (Assumption \ref{ass:sys}); the second by adding and subtracting $x_{\text{ss}}(u^k,w^k)$, the triangular inequality and the $\ell_{\text x}$-Lipschitz continuity of $x_{\text{ss}}$; the last since $\|\delta x^k\|, \|\delta u^k\| \leq
\sqrt{\| \delta x^{k}\|^2+\| \delta u^{k}\|^2}
 .$
Finally, by substituting the ISS bound \eqref{eq:thISS} in \eqref{eq:AGproof}, and taking the limit for $k \rightarrow \infty$, we obtain the asymptotic gain in \eqref{eq:AIOG}.
{\hfill $\blacksquare$}
\bibliographystyle{ieeetr}
\bibliography{library}

\end{document}